	\newtheorem{thm}{Theorem}
	\newtheorem*{mprobl}{Main Problem}
	\newtheorem{ass}{Assumption}
	\newtheorem{lem}{Lemma}
	\newtheorem{slem}{Sublemma}
	\newtheorem{prop}{Proposition}
	\newtheorem{defn}{Definition}
	\newtheorem*{ques}{Question}
	\newtheorem*{quest*}{Question}
	\newcounter{constant}
	\title{Minimal tori in $\mathbb{R}^4$}
	\date{ }
	\author{Marc Soret and Marina Ville}
\begin{document}
		\maketitle
		\begin{abstract} We describe tools for the study of minimal surfaces in $\mathbb{R}^4$; some are classical (the Gauss maps) and some are newer (the link/braid/writhe at infinity). We illustrate them by desingularizing the Enneper surface in $\mathbb{R}^3$. Then we look for complete proper non holomorphic
			minimal tori with total curvature $-8\pi$ and a single end immersed in $\mathbb{R}^4$.  We translate the problem into a system of $10$ quadratic or linear equations in $11$ real variables with coefficients in terms of the Weierstrass function $\wp$ and give explicit solutions for these equations if $T$ is a rectangular torus. For the square torus, we have a complete answer with a unique family of solutions generalizing the Chen-Gackstetter torus in $\mathbb{R}^3$. On the other hand, we show that there is no solution on the equianharmonic torus.
		\end{abstract}
		
		\section{Introduction}
	\subsection{Statement of the problem}
		A minimal surface $S$ in $\mathbb{R}^4$ is a locally area minimizing surface; equivalently it  admits local parametrizations which are harmonic and conformal.\\
		The surface $S$ inherits a metric from the Euclidean metric in $\mathbb{R}^4$. If this metric has finite total (Gaussian) curvature and if $S$ is properly immersed in $\mathbb{R}^4$, then, outside some large ball in $\mathbb{R}^4$, $S$ consists in a finit union of {\it ends}. An {\it end} is an annulus $E$ parametrized as follows ($N\geq 1$ is an integer): 
		$$\{|z|>R\}\longrightarrow\mathbb{R}^4$$
		\begin{equation}\label{la parametrisation du bout}
		 z\mapsto(z^N+o(|z^N|),o(|z^N|))
		\end{equation}
		
		\begin{mprobl}\label{main problem}
			Given a $2$-torus $T$, find a proper minimal immersion from $T$  with one point removed, having one single end and total curvature $-8\pi$.
			\end{mprobl}
		A $2$-torus $T$ is the quotient of $\mathbb{C}$ by a lattice generated by two linearly independent complex numbers $\omega_1$ and $\omega_2$. The answer to the Main Problem depends on the conformal structure on $T$  which is determined by the quotient $\tau=\dfrac{\omega_2}{\omega_1}$.
		
		In $\mathbb{R}^3$, there is a unique minimal torus, namely the Chen-Gackstatter torus ([2]), with total curvature $-8\pi$ and a single end; it is parametrized by the square torus given by $\tau= i$. \\
		\\
		Using the implicit function theorem, Xie and Ma showed the existence of solutions to the Main Problem in $\mathbb{R}^{4}_1$ for tori close to the square torus  ([10]); moreover they point out that for every complex number $\lambda$, the cubic curve in $\mathbb{C}^2$ 
		\begin{equation}\label{algebraic torus}
		C_\lambda=\{(x,y)\slash y^2=x(x-1)(x-\lambda)\}
		\end{equation} 
		is a minimal surface in  $\mathbb{R}^4$ (identified with $\mathbb{C}^2$) of total curvature $-8\pi$. The single end of $C_\lambda$ occurs for $x$ and $y$ tending to infinity.  As $\lambda$ goes through the complex numbers, the $C_\lambda$'s describe all conformal types of the $2$-torus. We  will look here for explicit non holomorphic solutions of the Main Problem.
		\subsection{The tools}\label{the tools}
		 A minimal surface in $\mathbb{R}^4$ is locally parametrized by $4$ holomorphic functions $e,f,g,h$ by a harmonic map
		\begin{equation}\label{les quatre fonctions en general}
			z\in\mathbb{D}\mapsto \big(e(z)+\bar{f}(z),g(z)+\bar{h}(z)\big)\in\mathbb{C}^2\simeq\mathbb{R}^4
		\end{equation}
		which is conformal i.e.
		\begin{equation}\label{les derivees des quatre fonctions}
			e'f'+g'h'=0
		\end{equation}	
	Without loss of generality, the point that we remove from the torus will be $0$. Thus $e',f',g',h'$ will be meromorphic functions on the torus with a common pole at $0$ and this  common pole gives us the end of the minimal torus. Note that  for a general Riemann surface we would be integrating $1$-forms but the torus has a non zero holomorphic $1$-form $dz$ so we deal directly with meromorphic functions.\\
	  The Weierstrass $\wp$-function on a torus $T$ is a  meromorphic function with a single pole of order $2$ in a fundamental domain and it serves as a building block for the meromorphic functions on $T$. So $e',f',g',h'$ will be given by expressions in $\wp$ and $\wp'$  which we require to verify (\ref{les derivees des quatre fonctions}). \\
	  We also have the period problem: by construction,  the derivatives of the coordinates given by (\ref{les quatre fonctions en general}) are  $2$-periodic functions but we also needs their antiderivatives to be $2$-periodic. In other words, we require 
	  \begin{equation}\label{les periodes 20231}
	  \int_{\gamma_i}e'+\overline{\int_{\gamma_i}f'}=0\ \ \ \ 
	  \int_{\gamma_i}g'+\overline{\int_{\gamma_i}h'}=0\ \ \ \ i=1,2
	  \end{equation}
	  where the $\gamma_i$'s are the fundamental cycles of the torus.\\
	  
		We end up with a system of quadratic equations in several complex unknowns and their conjugates; if the torus is defined by the lattice generated by $1,\tau$, the coefficients of these equations involve $\tau$, the coefficents $g_2$, $g_3$ of $\wp$ and the periods $\eta_1$, $\eta_2$ of the antiderivative $\zeta$ of $-\wp$. \\
	REMARK. 	Actually we have not one system of equations but two, which we call Type (I) and Type (II). However, the Type (I) system has solutions only if the invariants of the torus verify a specific equations. We have not succeeded in proving that such a torus does not exist but numerical tests strongly suggest that. \\
	\\
	The calculations require numerical estimates for which we use Sagemath.
		 
		\subsection{Summary of the results}
		We begin with prelimineraries about a minimal surface $S$ in $\mathbb{R}^4$ (\S \ref{preliminaires}). We recall the various definitions of the Gauss maps and check that they coincide. Next we define the link/braid/writhe at infinity which gives us the degree of the normal bundle of $S$ if $S$ is embedded. We illustrate this by desingularizing the Enneper surface in $\mathbb{R}^4$ (\S \ref{enneper}). \\
	
		We deform the Chen-Gackstatter $3D$ torus in a complex dimension $1$ family of minimal square tori in $\mathbb{R}^4$ and these are the only solutions of the Main Problem on the square torus (Theorem \ref{theorem sur le square torus}). \\
		We translate the Main Problem in a system of real algebraic equations. This enable us to prove the unicity on the square torus. On the other hand, we derive that there are no solutions to the Main Problem for the equianharmonic torus (Proposition \ref{prop equianharmonic}). Finally 
	after making additional assumptions we construct explicit solutions of the Main Problem for all rectangular tori, i.e. $\tau=Ri$ for a positive number $R$ (Theorem \ref{thm:rectangular tori}). Finally we describe the ends of these minimal tori and derive that the $4D$ Chen-Gacksatter tori are not embedded. We conclude with the following question.
	\begin{ques}
	Are the rectangular minimal tori described in Theorem \ref{thm:rectangular tori} embedded?	
		\end{ques}
		
	\section{Preliminaries: minimal surfaces in $\mathbb{R}^4$}\label{preliminaires}
	We let $\Sigma$ be a properly immersed oriented minimal surface in $\mathbb{R}^4$ and assume that $\Sigma$ has finite total curvature (for the metric induced by the metric in $\mathbb{R}^4$).  

	\subsection{The Gauss maps}
	One defines two Gauss maps $\gamma_{\pm}:\Sigma\longrightarrow\mathbb{R}^4$.  We recall three equivalent definitions as each one has different advantages.\\
	Let $p\in\Sigma$ and let $(u,v)$ a positive orthonormal basis of  the tangent plane $T_p\Sigma$.
	\subsubsection{The Eells-Salamon approach ([4])}\label{eells-salamon subsection}
	Eells-Salamon identify the set of complex structures on $\mathbb{R}^4$ which preserve the metric and preserve (resp. reverse) the orientation of $\mathbb{R}^4$ with the $2$-sphere $Z_+$ (resp. $Z_-$) of unit $2$-vectors $\alpha$ with $\star \alpha=\alpha$ (resp. $\star \alpha=-\alpha$). The map $\star:\Lambda^2(\mathbb{R}^4)\longrightarrow \Lambda^2(\mathbb{R}^4)$ is the Hodge operator.  We set $\gamma_\pm(p)$ to be the unique such complex structure $J$ such that $T_p\Sigma$ is an oriented complex $J$-line. In other words, $J(u)=v$.\\
	Note that this definition does not depend on the choice of a basis of $\mathbb{R}^4$, which enabled [4] to extend it to a general Riemannian manifold, thus creating the {\it twistor approach} to minimal surfaces.\\
	\\ By contrast, the next two definitions depend on the choice of a positive orthonormal basis $(e_1,e_2,e_3,e_4)$ of $\mathbb{R}^4$.
	\subsubsection{The quaternions approach ([1])}\label{definition par les quaternions}
	We identify $\mathbb{R}^4$ with the quaternions $\mathbb{H}$
	by setting\\ $x_1e_1+x_2e_2+x_3e_3+x_4e_4\mapsto x_1+x_2i+x_3j+x_4k$ and we set 
	\begin{equation}
	\label{quaternions}
	\gamma_+(p)=v\cdot u^{-1}
	\end{equation}
	It is a unit imaginary quaternion so we can write it as $\gamma_+(p)=ai+bj+ck$ and identify it with the complex structure $J$ in $Z_+$ such that $Je_1=ae_2+be_3+ce_4$. 
	\subsubsection{The classical approach ([6])}\label{classique} 
	We complexify $\mathbb{R}^4$, write $(w_1,w_2,w_3,w_4)=u-iv\in 
	\mathbb{R}^4\otimes\mathbb{C}$ and set
	\begin{equation}
	\label{la forme de osserman et al}
	\gamma_{+}(p)=\frac{w_3+iw_4}{w_1-iw_2}\in\mathbb{C}P^1\ \ \ 
	\gamma_{-}(p)=\frac{-w_3+iw_4}{w_1-iw_2}\in\mathbb{C}P^1
	\end{equation}
	Thus, for the local expression (\ref{les quatre fonctions en general}), we have 
	\begin{equation}
	\label{la forme de osserman et al}
	\gamma_{+}(p)=\frac{g'}{f'}\ \ \ 
	\gamma_{-}(p)=-\frac{h'}{f'}
	\end{equation}
	
	We can see (\ref{la forme de osserman et al}) as a stereographic projection of (\ref{quaternions})
	\begin{lem}\label{lemme pour l'identification}
		If $u\cdot v^{-1}=ai+bj+ck$ (\S \ref{definition par les quaternions}), the $w_i$'s defined in \S \ref{classique} verify
		\begin{equation}\label{egalite entre les deux gauss maps1}
		\frac{w_3+iw_4}{-w_1+iw_2}=-i\frac{b+ic}{1-a}
		\end{equation}
	\end{lem}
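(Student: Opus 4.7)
I would prove the identity by direct quaternion computation, reducing first to a convenient representative of the oriented tangent plane. Both sides of (\ref{egalite entre les deux gauss maps1}) are invariant under an $\mathrm{SO}(2)$-rotation $(u,v)\mapsto(\cos\theta\,u+\sin\theta\,v,\,-\sin\theta\,u+\cos\theta\,v)$ of the basis: on the one hand $u-iv$ is multiplied by $e^{i\theta}$, so the ratio $(w_3+iw_4)/(-w_1+iw_2)$ is unchanged; on the other hand, a short computation using $|u|^2=|v|^2=1$, $\langle u,v\rangle=0$, and the resulting identity $v\bar u=-u\bar v$ (valid because $u\bar v$ has zero scalar part) shows that the two mixed terms in the expansion of $u'\bar{v'}$ cancel, so $u\cdot v^{-1}$ is also unchanged. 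It therefore suffices to verify (\ref{egalite entre les deux gauss maps1}) on one representative of each oriented $2$-plane.

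The natural choice is $u=1$. The hypothesis then forces $v=(a\mathbf{i}+b\mathbf{j}+c\mathbf{k})^{-1}=-(a\mathbf{i}+b\mathbf{j}+c\mathbf{k})$, since a unit imaginary quaternion squares to $-1$. Hence $(u_1,u_2,u_3,u_4)=(1,0,0,0)$ and $(v_1,v_2,v_3,v_4)=(0,-a,-b,-c)$, and substituting $w_n=u_n-iv_n$ yields $(w_1,w_2,w_3,w_4)=(1,ia,ib,ic)$. Both sides of (\ref{egalite entre les deux gauss maps1}) then become explicit rational expressions in the three real variables $a,b,c$, and the identity follows from a short comparison.

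The principal obstacle is the rotation-invariance of $u\cdot v^{-1}$, which requires care with the non-commutativity of quaternion multiplication; the key algebraic input is $\overline{u\bar v}=v\bar u=-u\bar v$ for orthonormal $u,v$. Once this is in hand, the rest of the argument is a brief symbolic manipulation. As a sanity check before expanding in full generality, I would first verify both sides on the pair $(u,v)=(1,\mathbf{j})$, for which $u v^{-1}=-\mathbf{j}$, so $(a,b,c)=(0,-1,0)$ and both sides of (\ref{egalite entre les deux gauss maps1}) evaluate to $i$.
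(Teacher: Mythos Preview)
Your rotation-invariance step is correct, but the reduction to $u=1$ does not follow from it and leaves a genuine gap. The $\mathrm{SO}(2)$-invariance only shows that both sides of (\ref{egalite entre les deux gauss maps1}) depend on the oriented $2$-plane spanned by $(u,v)$, not on the particular orthonormal basis. Choosing $u=1$ then restricts attention to oriented $2$-planes that contain the vector $e_1=1$; this is a $2$-parameter family (parametrized by the direction of $v$ in $e_1^\perp$), whereas the oriented Grassmannian of $2$-planes in $\mathbb{R}^4$ is $4$-dimensional. Concretely, for any $(a,b,c)$ there is a whole $\mathbb{CP}^1$ of oriented planes with $u\cdot v^{-1}=ai+bj+ck$, and you have checked the identity on exactly one of them.

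The paper closes this gap differently: it fixes the complex structure $J$ with $Je_1=ae_2+be_3+ce_4$ and verifies (\ref{egalite entre les deux gauss maps1}) for $w=e_k-iJe_k$, $k=1,\dots,4$. Once the denominator is cleared, the identity is \emph{linear} in $(w_1,w_2,w_3,w_4)$, so it then holds for every $X-iJX$; taking $X=u$ (so that $JX=v$) gives the general case. Your argument can be repaired in the same spirit, or alternatively by proving one more invariance: under right quaternion multiplication $(u,v)\mapsto(uq,vq)$ by a unit quaternion $q$, the product $uv^{-1}$ is trivially unchanged, and a short check shows that $w_3+iw_4$ and $-w_1+iw_2$ are each multiplied by the same complex phase. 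Since right multiplication by $q=u^{-1}u'$ takes any pair $(u,v)$ with $uv^{-1}=ai+bj+ck$ to any other such pair $(u',v')$, this extra invariance makes the reduction to $u=1$ legitimate.
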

	\begin{proof}
		Fix $J$ by setting $J(e_1)=ae_2+be_3+ce_4$. A computation shows that the $(w_1,w_2,w_3,w_4)$'s given by $e_k-iJe_k$ for $i=1,2,3,4$ all verify (\ref{egalite entre les deux gauss maps1}). Hence (\ref{egalite entre les deux gauss maps1}) is true for every $X-iJX, X\in \mathbb{R}^4$.
	\end{proof}
We derive the following from (\ref{la forme de osserman et al}) and from the Eells-Salamon construction  \S \ref{eells-salamon subsection}.
\begin{prop}\label{pas complexe}
	The tori constructed in Theorems \ref{theorem sur le square torus} and \ref{thm:rectangular tori} are not complex for any isometric complex structure on $\mathbb{R}^4$.
\end{prop}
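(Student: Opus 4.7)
The plan is to reduce, via the Eells-Salamon description of \S\ref{eells-salamon subsection}, the statement to the non-constancy of the two Gauss maps. That description tells us that an oriented minimal surface $\Sigma\subset\mathbb{R}^4$ is a complex curve for an orientation-preserving (resp.\ orientation-reversing) isometric complex structure $J$ on $\mathbb{R}^4$ precisely when the Gauss map $\gamma_+$ (resp.\ $\gamma_-$), viewed as a map into $Z_+$ (resp.\ $Z_-$), is identically equal to $J$. So the proposition is equivalent to the assertion that neither $\gamma_+$ nor $\gamma_-$ is constant on the tori constructed in Theorems \ref{theorem sur le square torus} and \ref{thm:rectangular tori}.

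Using formula (\ref{la forme de osserman et al}), this in turn amounts to showing that the meromorphic functions $g'/f'$ and $h'/f'$ on the punctured torus are both non-constant, where the value $\infty$ is allowed as a constant (corresponding to $f'\equiv 0$ or $h'\equiv 0$, i.e.\ to the purely holomorphic/antiholomorphic case). I would then return to the explicit Weierstrass data furnished by each theorem and verify that $f'$ is not a scalar multiple of $g'$ and not a scalar multiple of $h'$.

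The efficient way to carry this out is to compare principal parts at the puncture $0$: since $\wp$ has a pole of order $2$ and $\wp'$ a pole of order $3$ there, the leading Laurent coefficients of $f',g',h'$ can be read off directly from their expressions in terms of $1,\wp,\wp'$, and this should exhibit the pairs $(f',g')$ and $(f',h')$ as linearly independent over $\mathbb{C}$ in every case. In particular $f'\not\equiv 0$, so the $\infty$-case is also excluded.

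The only real obstacle is bookkeeping: one must go back into the explicit data produced by each theorem and case by case rule out accidental proportionality of the leading terms. No conceptual difficulty is expected, consistently with the fact that the very purpose of Theorems \ref{theorem sur le square torus} and \ref{thm:rectangular tori} is to produce minimal tori genuinely different from the holomorphic algebraic family $C_\lambda$ of (\ref{algebraic torus}).
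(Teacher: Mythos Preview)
Your proposal is correct and follows exactly the route the paper indicates: the paper's proof is the single sentence ``We derive the following from (\ref{la forme de osserman et al}) and from the Eells--Salamon construction \S\ref{eells-salamon subsection},'' and you have simply spelled out what that sentence means---reduce to non-constancy of $\gamma_\pm=g'/f',\,-h'/f'$ and read this off from the pole orders of the explicit Weierstrass data (in both theorems $f'$ has a pole of order $2$ while $g',h'$ have poles of order $3$, since $t\neq 0$).
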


	{\bf From now on, we  assume that $\Sigma$\ has finite total curvature i.e. \\ $\boxed{-\infty<\int_\Sigma K^T\leq 0}$ where $K^T$ is the curvature of the metric on $\Sigma$ induced by the metric on $\mathbb{R}^4$}.
	\subsection{The link/writhe/braid at infinity}\label{writhe  at infinity}
	For $R$ large enough, $\Sigma\cap(\mathbb{R}^4\backslash \mathbb{B}(0,R))$ is a finite union of ends (see (\ref{la parametrisation du bout})) above.
	The plane defined by the first complex coordinate in (\ref{la parametrisation du bout}) is the {\it tangent plane at infinity} of the end.
	The quantity $N-1$ is called the {\it branching order at infinity} for the end and $N$ is the {\it order of the end}.
	
	\subsubsection{Link and writhe at infinity}\label{link and writhe}
	If moreover $\Sigma$ is embedded outside a ball in $\mathbb{R}^4$, we define a link at infinity similarly to the case of complex curves (cf. [11]).\\  We let $\Sigma_R=\Sigma\cap \mathbb{B}(0,R)$; then, for $R>0$ large enough,
	\begin{equation}
	L_R=\partial\Sigma_R=\Sigma\cap \mathbb{S}(0,R)
	\end{equation}
	is a link whose isotopy type does not depend on $R$. We call it the {\it link at infinity} of $\Sigma$. \\
	Each component of $L_R$ corresponds to an end $E_i$ of $\Sigma$. For each $E_i$, take a vector $X_i$ in the normal plane at infinity to $E_i$. Formula (\ref{la parametrisation du bout}) shows us that on $E_i$, $X_i$ is close to being tangent to  $\mathbb{S}(0,R)$; thus we can project the $X_i$'s to $\mathbb{S}(0,R)$ 	and get a framing $X$ of $L_R$. If we push $L_R$ in the direction of $X_i$, we get another link $\hat{L}_R$. The linking number
	$lk(L_R,\hat{L}_R)$ is the {\it self-linking number} of $L_R$ w.r.t. the framing $X_E$.
	\begin{defn}
		For $R$ large enough, the {\it self-linking number} of $L_R$ w.r.t. the framing $X$ does not depend on the vector field $X$ orthogonal to the plane tangent at infinity at each end $E_i$. It also does not depend on $R$ and 
		we call it the {\bf writhe at infinity} $w_\infty(\Sigma)$ of $\Sigma$.
	\end{defn}  
The writhe at infinity is similar to the {\it self-linking number} defined by Joel Fine in ([5]). 
	\subsubsection{Braids}
	For a single end, the knot at infinity can be interpreted as a braid.\\
	We recall that a {\it closed braid} in $\mathbb{R}^3$ with oriented axis $Oz$ is a loop $\gamma(t)$ whose cylindrical coordinates $(\rho(t), \theta(t), z(t))$ verify for all $t$,
	$$\rho(t)\neq 0, \ \ \ \ \theta'(t)>0$$
	The number of strands of $\gamma$ is the degree of $\theta:\mathbb{S}^1\longrightarrow \mathbb{S}^1$. The algebraic length $e(\gamma)$ is the linking number of $\gamma$ with a loop $\hat{\gamma}$ obtained by pushing $\gamma$ slightly in the direction of $Oz$.\\
	By replacing a line by a great circle, we have a similar definition in $\mathbb{S}^3$.\\
	We can also write a braid with $N$ strands as an element of the braid group $B_N$ generated by the $\sigma_i$'s which exchange the $i$-th strand with the $i+1$-th strand. Writing  a braid as $\prod_{i} \sigma_{k_i}^{m(i)}$, we get
	\begin{equation}
	e(\prod_{i} \sigma_{k_i}^{m(i)})=\sum_{i}m(i)
	\end{equation}
	
	\subsubsection{A single end: the braid at infinity}
	Assume that $\Sigma$ has a single end. \\
	We let $(z_1,z_2)$ be complex coordinates in $\mathbb{R}^4\cong\mathbb{C}^2$.
	We defined $K_R$ in $\mathbb{S}(0,R)$ but we can define it equivalently in a cylinder $C(R)$ as
	$$K_R=\Sigma\cap\underbrace{\{(z_1,z_2)\in\mathbb{C}^2: |z_1|=R\}}_{C(R)}$$
	Thus, the knot $K_R$ is a closed $N$-braid with axis parallel to the vector $X_E$ and the writhe $w_\infty(E)$ is its algebraic length.
	\subsubsection{Several ends}\label{several ends}
	If $\Sigma$ has several ends $E_i$, $i=1,...,k$, each end has a knot at infinity which is a closed braid.  We can define the writhe at infinity of each end $w_\infty(E_i)$ as in \S \ref{link and writhe}. 
	\begin{prop} If $\Sigma$ has $k$ ends, each of branching order $N_i-1$ and if the tangent planes at infinity $P_i$ are mutually transverse, then 
		\begin{equation}
		w_\infty(\Sigma)=\sum_{i=1}^k w_\infty(E_i)+2\sum_{1\leq i<j\leq k}N_iN_j\sigma(i,j)
		\end{equation}
		where $\sigma(i,j)$ is $1$ (resp. $-1$) if $P_i$ and $P_j$ intersect positively (resp. negatively).
		\end{prop}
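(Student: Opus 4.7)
The plan is to start from the standard decomposition of the self-linking number of a framed multi-component link and then identify each cross-component linking number geometrically using the tangent planes at infinity.

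First, write $L_R=\bigsqcup_{i=1}^k L_{R,i}$, where $L_{R,i}$ is the closed braid coming from the end $E_i$, and let $\hat L_R=\bigsqcup_i \hat L_{R,i}$ be the push-off in the direction of the framing $X$. Since $lk(L_R,\hat L_R)=\sum_{i,j} lk(L_{R,i},\hat L_{R,j})$ and since pushing one component off along a nowhere-zero vector field does not change its linking with a disjoint component, the $i\neq j$ terms contribute $lk(L_{R,i},L_{R,j})$, counted twice (once as $(i,j)$ and once as $(j,i)$). The diagonal terms $i=j$ are by definition the per-end writhes $w_\infty(E_i)$. Hence
\begin{equation*}
w_\infty(\Sigma)=\sum_{i=1}^k w_\infty(E_i)+2\sum_{1\le i<j\le k} lk(L_{R,i},L_{R,j}).
\end{equation*}
The problem reduces to showing $lk(L_{R,i},L_{R,j})=N_iN_j\,\sigma(i,j)$ for $R$ large.

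Next I fix transverse oriented $2$-planes $P_i,P_j\subset\mathbb{R}^4$ through the origin and compute the model linking number: the two great circles $P_i\cap\mathbb{S}(0,R)$ and $P_j\cap\mathbb{S}(0,R)$ are a Hopf-type pair whose linking number is $+1$ (resp.\ $-1$) precisely when $P_i\oplus P_j$ induces the positive (resp.\ negative) orientation of $\mathbb{R}^4$, i.e.\ when $\sigma(i,j)=+1$ (resp.\ $-1$). This is the key geometric input; it can be verified directly by choosing an orthonormal basis adapted to $P_i,P_j$ and computing one Seifert intersection, or by invoking the standard Hopf link calculation.

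Finally I upgrade the model to the true ends. Using the parametrisation (\ref{la parametrisation du bout}), each end $E_i$ at scale $R$ is a $C^1$-small perturbation of the branched cover $z\mapsto(z^{N_i},0)$ of $P_i$; intersecting with $\mathbb{S}(0,R)$ (or with the cylinder $|z_1|=R$ in a chart adapted to $P_i$), the curve $L_{R,i}$ projects to a curve that winds $N_i$ times around $P_i\cap\mathbb{S}(0,R)$, and the $C^1$-distance of $L_{R,i}$ to $N_i$ parallel copies of $P_i\cap\mathbb{S}(0,R)$ tends to $0$ as $R\to\infty$. Because linking number is a locally constant integer under isotopies that keep the components disjoint, and since transversality of $P_i,P_j$ guarantees that the $N_i$ strands and the $N_j$ strands stay disjoint during the homotopy to the model configuration, one obtains $lk(L_{R,i},L_{R,j})=N_iN_j\,\sigma(i,j)$, from which the proposition follows.

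The main obstacle is the last step: one must make the $C^1$-closeness estimate precise enough to guarantee that the homotopy from $L_{R,i}\sqcup L_{R,j}$ to $N_i+N_j$ parallel copies of the model great circles can be taken through disjoint curves. Transversality of $P_i$ and $P_j$ is essential here, because the two model great circles meet the sphere in disjoint copies, and $C^1$-smallness together with transversality ensures that no unwanted intersections are created along the isotopy.
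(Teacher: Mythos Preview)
The paper states this proposition without proof, so there is no argument to compare against. Your approach is the natural one and is correct: decompose the self-linking of the framed link $L_R$ into diagonal (per-end writhes) and off-diagonal (pairwise linking) contributions, then identify each $lk(L_{R,i},L_{R,j})$ via the asymptotic model of two transverse planes.

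One small imprecision worth tightening: you describe $L_{R,i}$ as $C^1$-close to ``$N_i$ parallel copies'' of the great circle $P_i\cap\mathbb{S}(0,R)$. In general $L_{R,i}$ is a single connected curve (an $N_i$-fold cable of that great circle), not a disjoint union of $N_i$ copies. This does not affect the conclusion, since for the linking number with the disjoint $L_{R,j}$ only the homology class in $H_1(\mathbb{S}(0,R)\setminus L_{R,j})$ matters, and the $N_i$-fold cable is homologous to $N_i$ times the core circle there. Phrasing it this way also lets you bypass the delicate issue of constructing a disjoint isotopy to actual parallel copies: it suffices that $L_{R,i}$ is homotopic, in the complement of $L_{R,j}$, to the $N_i$-th iterate of $P_i\cap\mathbb{S}(0,R)$, which follows directly from the parametrisation~(\ref{la parametrisation du bout}) and the transversality of $P_i,P_j$.
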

	\subsubsection{Example: the ends of the $4D$ Enneper surfaces}\label{enneper}
	An Enneper surface is a minimal immersion of $\mathbb{R}^2$ in $\mathbb{R}^4$ with a codimension $1$ singularity and total curvature $-4\pi$. It is is parametrized by
	\begin{equation}\label{enneper3d} z\mapsto (\dfrac{1}{3}z^3-\bar{z},
\dfrac{1}{2}z^2+\dfrac{1}{2}\bar{z}^2)
	\end{equation}  In $\mathbb{R}^4$, we  deform it in two different ways
	\begin{equation}\label{enneper4d points doubles} 
		i)\ \ \ \ \ \ \ \ \ \ \ \ z\mapsto (\dfrac{1}{3}z^3-\bar{z},
	\dfrac{\lambda}{2}z^2+\dfrac{1}{2\lambda}\bar{z}^2)\ \ \ \ \ \ \ \lambda\in\mathbb{C}
	\end{equation} which has two transverse double points. Its end is similar to the end of the complex algebraic curves (\ref{algebraic torus}) and its braid at infinity is the braid of the $(3,2)$-torus knot.\\
	\includegraphics[scale=0.4]{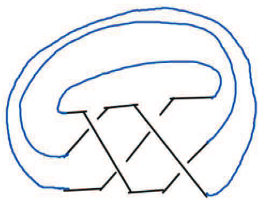}\\
	\begin{equation}\label{embedded4Denneper}
	ii)\ \ \ \ \ \ \ \ \ \ \ \  z\mapsto (\dfrac{1}{3}z^3-is^2z-\bar{z},
	\dfrac{1}{2}z^2+\dfrac{1}{2}\bar{z}^2+se^{\frac{i\pi}{4}}z-se^{-\frac{i\pi}{4}}\bar{z})
		\end{equation} which is embedded.  The knot at infinity is trivial\\
		\includegraphics[scale=0.4]{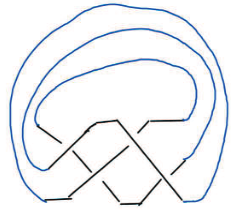}\\
		REMARK. At the moment, we have very few examples of  proper minimal non holomorphic embeddings of $\mathbb{C}$ of finite curvature in $\mathbb{R}^4$. Here
		is one of them.\begin{prop}
			The following is a minimal embedding of $\mathbb{C}$ in $\mathbb{R}^4$ of total curvature $-8\pi$. It is preserved by the symmetries w.r.t. the planes generated by $(e_1,e_3)$ and $(e_2,e_4)$.
			\begin{equation}\label{formule pour F}
			F:z\mapsto(\frac{z^5}{5}-z+\bar{z},-\frac{z^3}{3}+\frac{\bar{z}^3}{3}+z+\bar{z})
			\end{equation}
			\[ = \left( \begin{array}{c}
			\frac{u^5}{5}-2u^3v^2+uv^4 \\
			\frac{v^5}{5}+u^4v-2u^2v^3-2v\\
			2u\\
			\frac{2v^3}{3}-2u^2v \end{array} \right).\] 
		\end{prop}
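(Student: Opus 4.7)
The plan is to verify each of the four assertions within the setup of \S\ref{the tools}. Writing
\[
F(z) = \bigl(e(z) + \overline{f(z)},\, g(z) + \overline{h(z)}\bigr)
\]
with $e(z) = z^5/5 - z$, $f(z) = z$, $g(z) = -z^3/3 + z$ and $h(z) = z^3/3 + z$, harmonicity is automatic from the holomorphic/antiholomorphic splitting, and the conformality relation (\ref{les derivees des quatre fonctions}) reduces to the polynomial identity $(z^4-1)\cdot 1 + (1-z^2)(1+z^2) = 0$. Since $f'\equiv 1$ never vanishes, $dF$ is nowhere degenerate, so $F$ is a minimal immersion on all of $\mathbb{C}$.

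For the total curvature I would read off the Gauss maps from (\ref{la forme de osserman et al}): $\gamma_+ = 1 - z^2$ and $\gamma_- = -(1 + z^2)$. Each is a polynomial of degree $2$, hence extends to a degree $2$ map $\mathbb{C}P^1 \to \mathbb{C}P^1$ after compactifying at the single end, so the Osserman-type formula $\int_\Sigma K\,dA = -2\pi(\deg\gamma_+ + \deg\gamma_-)$ in $\mathbb{R}^4$ yields $-8\pi$. The two symmetries then follow from the explicit real formulas: substituting $z\mapsto \bar z$ fixes $F_1$ and $F_3$ and flips the signs of $F_2$ and $F_4$, realizing the reflection through the $(e_1,e_3)$-plane; substituting $z\mapsto -\bar z$ fixes $F_2$ and $F_4$ and flips the signs of $F_1$ and $F_3$, realizing the reflection through the $(e_2,e_4)$-plane. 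Both are one-line verifications.

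The main obstacle is injectivity, which is what upgrades the immersion to an embedding. From the third coordinate $F_3 = 2u$ the real part $u=\mathrm{Re}\,z$ is recovered uniquely, so it is enough to show that for fixed $u$ the triple $(F_1,F_2,F_4)$ separates distinct imaginary parts. Assuming $F(u+iv) = F(u+iv')$ with $v \neq v'$, I would factor
\[
F_4 - F_4' = \tfrac{2}{3}(v-v')\bigl[v^2 + vv' + v'^2 - 3u^2\bigr],\qquad
F_1 - F_1' = u(v^2 - v'^2)\bigl[v^2 + v'^2 - 2u^2\bigr],
\]
and extract, in the generic case $u\neq 0$ and $v' \neq -v$, the two identities $v^2+vv'+v'^2 = 3u^2$ and $v^2+v'^2 = 2u^2$, which force $vv' = u^2$; plugging these symmetric expressions into the degree-$5$ equation $F_2-F_2'=0$ collapses it to a purely numerical contradiction of the form $-4u^4 = 2$. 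Each exceptional sub-case ($v' = -v$, $u = 0$, or $v = 0$) is dispatched directly: on the coordinate axes $\{u = 0\}$ and $\{v = 0\}$ the map is injective by inspection of $F_4$, resp.\ $F_3$, and in the remaining case $v' = -v$ with $v \neq 0$, the equations $F_4 - F_4' = 0$ and $F_2 - F_2' = 0$ again yield an impossible real equation. Combined with properness, which follows from $|F(z)| \sim |z|^5/5$ at infinity, injectivity upgrades the immersion to a proper embedding.
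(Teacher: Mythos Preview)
Your argument is correct and complete. The paper states this proposition without proof, so there is nothing to compare against; your verification supplies exactly what is missing. The conformality check, the Gauss-map computation of the total curvature via (\ref{la forme de osserman et al}) and (\ref{equation:the two curvatures}), and the two reflection symmetries are all routine and correctly done. The injectivity argument is the only part with content, and your factorisations
\[
F_4-F_4'=\tfrac{2}{3}(v-v')\bigl[v^2+vv'+v'^2-3u^2\bigr],\qquad
F_1-F_1'=u(v-v')(v+v')\bigl[v^2+v'^2-2u^2\bigr]
\]
together with the symmetric-function reduction in $F_2-F_2'$ do give the contradiction $-4u^4-2=0$ in the generic case. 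The special case $v'=-v$, $v\neq 0$ yields $v^2=3u^2$ from $F_4$ and then $-\tfrac{16}{5}u^4=2$ from $F_2$, again impossible; and the case $u=0$ is immediate since $v^2+vv'+v'^2=0$ forces $v=v'=0$. One cosmetic remark: the sub-case ``$v=0$'' in your list is redundant --- it is either absorbed by the generic case (if $v'\neq 0$ and $u\neq 0$) or gives $v=v'$ --- but this does no harm.
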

	\subsection{Integral formulae for the  curvatures}\label{integral formulae for the normal bundle}
	We recall some classical properties ([6], [7], etc)).\\
	There exists a closed Riemann surface $\hat{\Sigma}$ without boundary and a finite number of points $p_1,...,p_d$ in 
	$\hat{\Sigma}$ such that 
	$\Sigma=\hat{\Sigma}\backslash\{p_1,...,p_d\}$
	and the Gauss maps $\gamma_{\pm}$ extends to holomorphic maps $\hat{\gamma}_\pm:\hat{\Sigma}\longrightarrow \mathbb{C}P^1.$ We denote by $d_+$ (resp. $d_-$) the degree of $\hat{\gamma}_+$ (resp. $\hat{\gamma}_-$). If $K^T$ and $K^N$ are the curvatures of the tangent and normal bundles, we have
	\begin{equation}\label{equation:the two curvatures}
	-\int_{\Sigma}K^T=2\pi (d_++d_-)\ \ \ \ \ -\int_{\Sigma}K^N=2\pi (d_+-d_-)
	\end{equation}
	REMARK. Depending on the conventions, the Gauss maps are holomorphic or antiholomorphic and here we take them to be holomorphic so $d_\pm\geq 0$. 
	\begin{thm}\label{thm: integrale de la courbure tangente}([7]) If  $\Sigma$ has $k$ ends, each of brancing order $N_i-1$ and Euler characteristic $\chi(\Sigma)$, then
		\begin{equation}
		\frac{1}{2\pi}\int_{\Sigma}K^T=-\sum_{i=1}^k N_i+\chi(\Sigma)
		\end{equation}
	\end{thm}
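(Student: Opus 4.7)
The plan is to apply the classical Gauss--Bonnet theorem to the compact subsurface $\Sigma_R = \Sigma \cap \overline{\mathbb{B}(0,R)}$ for $R$ large and let $R \to \infty$. For such $R$, the boundary $\partial \Sigma_R$ consists of $k$ disjoint closed curves $\Gamma_i^R$, one in each end $E_i$, and Gauss--Bonnet gives
\begin{equation*}
\int_{\Sigma_R} K^T \, dA + \int_{\partial \Sigma_R} k_g \, ds = 2\pi \chi(\Sigma_R).
\end{equation*}
The Euler characteristic is stable: writing $\Sigma = \hat{\Sigma} \setminus \{p_1,\dots,p_k\}$, each end $E_i$ is conformally a punctured disk, so $\Sigma_R$ is a deformation retract of $\Sigma$ and $\chi(\Sigma_R) = \chi(\Sigma)$ for $R$ large enough. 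The finite total curvature hypothesis ensures $\int_{\Sigma_R} K^T \to \int_{\Sigma} K^T$ as $R \to \infty$.

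The entire content of the theorem is therefore the asymptotic evaluation of the boundary integral, and I would prove
\begin{equation*}
\lim_{R \to \infty} \int_{\Gamma_i^R} k_g \, ds = 2\pi N_i
\end{equation*}
for each end. This is where I would exploit the end parametrization (\ref{la parametrisation du bout}): on $E_i$, the map is $z \mapsto (z^{N_i} + o(|z|^{N_i}), o(|z|^{N_i}))$ with $|z| > R_0$. For a fixed large sphere $\mathbb{S}(0,R)$, the preimage of $\Gamma_i^R$ in the $z$-disk is, to leading order, the circle $|z| = R^{1/N_i}$, traversed once; under the map it is sent to a curve which is asymptotic to the circle $z_1 = R e^{i N_i \theta}$, $z_2 = 0$, traversed with argument going from $0$ to $2\pi N_i$.

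Since the end has finite total curvature, the induced metric is asymptotically flat in the punctured-disk coordinate $z$ (indeed, the Gauss map extends holomorphically to the puncture, as used in \S \ref{integral formulae for the normal bundle}). The key step is to show that the geodesic curvature integral along $\Gamma_i^R$ equals the total turning of the tangent vector in the asymptotic flat structure, plus a term which vanishes as $R \to \infty$ (this decay can be read off from the $o(|z|^{N_i})$ error in the parametrization, whose derivative is $o(|z|^{N_i-1})$, controlling the deviation of the induced metric from the flat conical metric of total angle $2\pi N_i$). The total turning along the once-wound circle in $z$ corresponds to $N_i$ full turns of the tangent vector in the target, giving $2\pi N_i$.

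Assembling the pieces, the limit of Gauss--Bonnet reads $\int_\Sigma K^T + 2\pi \sum_i N_i = 2\pi \chi(\Sigma)$, which after dividing by $2\pi$ is the desired formula. The main technical obstacle is the third step: controlling the error in the geodesic curvature uniformly as $R \to \infty$. I expect this to follow by differentiating (\ref{la parametrisation du bout}) and using that $K^T$ is integrable on each end, so that the excess total curvature on $\{|z| > \rho\}$ tends to $0$, allowing one to compare $\int_{\Gamma_i^R} k_g$ to its value in the flat model via a small Gauss--Bonnet applied on an annular piece of $E_i$.
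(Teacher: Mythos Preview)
The paper does not give its own proof of this theorem: it is quoted as a result of Jorge and Meeks [7] and stated without argument. So there is nothing in the paper to compare your proposal against.

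That said, your outline is the standard Jorge--Meeks argument and is correct in structure: Gauss--Bonnet on $\Sigma_R$, stability of $\chi$, and the asymptotic $\int_{\Gamma_i^R} k_g\,ds \to 2\pi N_i$ read off from the end model (\ref{la parametrisation du bout}). Your identification of the only real technical point --- controlling the geodesic-curvature error via the $o(|z|^{N_i})$ terms, or equivalently via the integrability of $K^T$ on the end --- is accurate, and the annular Gauss--Bonnet trick you propose for this step is exactly how one closes it.
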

The next theorem and its proof use the notions and notations of \S \ref{writhe  at infinity}.
	\begin{thm}\label{proposition:curvature of the normal bundle}  
		If $\Sigma$ is embedded except for a finite number $D_\Sigma$ of transverse double points counted with sign,
		\begin{equation}\label{courbure du fibre normal;immerge}
		\frac{1}{2\pi}\int_{\Sigma}K^N=w_\infty(\Sigma) -2D_\Sigma
		\end{equation}

	\end{thm}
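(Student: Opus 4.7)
The plan is to interpret both sides as algebraic intersection counts of $\Sigma$ with a suitable push-off $\Sigma'$. The three sources of intersection are zeros of a normal section (giving the normal Euler number via Gauss--Bonnet), transverse double points (contributing $\pm 2$ each), and boundary linking (which recovers the writhe at infinity).

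The first step is to build a smooth section $X$ of the normal bundle $N\Sigma$ which, on each end $E_i$, coincides with a constant vector $X_i$ in the normal plane at infinity (such vectors exist by the parametrization (\ref{la parametrisation du bout})) and which in a large ball $B(0,R)$ containing all the double points has only non-degenerate isolated zeros. Let $\Sigma'$ be the push-off of $\Sigma$ by $\epsilon X$ for small $\epsilon>0$. Applying Chern--Gauss--Bonnet to the rank-2 bundle $N\Sigma$, with trivialization supplied by $X$ near the ends, the boundary term at infinity vanishes (the connection one-form in the constant frame at infinity decays as the end approaches its tangent plane at infinity by (\ref{la parametrisation du bout})), so
\begin{equation}
\frac{1}{2\pi}\int_\Sigma K^N \;=\; \sum_{X(p)=0}\mathrm{ind}(X,p).
\end{equation}

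Next I would analyze $\Sigma\cap\Sigma'$ region by region. Near a zero $p$ of $X$ where $\Sigma$ is locally embedded, the local contribution is $\mathrm{ind}(X,p)$. Near a transverse double point of $\Sigma$ of sign $\sigma=\pm 1$, with local sheets $D_1,D_2$ pushed to $D_1',D_2'$ by $X$, each of the intersections $D_1\cap D_2'$ and $D_2\cap D_1'$ gives a single transverse point of sign $\sigma$ (a two-plane in $\mathbb{R}^4$ intersects any translate of a transverse two-plane in exactly one point with the same sign), so each double point contributes $2\sigma$, totalling $2D_\Sigma$. Outside a large ball, $\Sigma$ and $\Sigma'$ differ by a vector in the normal plane at infinity and hence do not meet.

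Truncating at $\mathbb{S}(0,R)$ for $R$ large, one sets $\Sigma_R=\Sigma\cap B(0,R)$ and $\Sigma'_R=\Sigma'\cap B(0,R)$; their boundaries are $L_R$ and $\hat L_R$. After resolving the $D_\Sigma$ transverse double points, $\Sigma_R$ becomes a genuine embedded Seifert surface for $L_R$, and $\hat L_R$ is obtained from $L_R$ precisely by the framing $X$ which, restricted to each $L_R$--component of $\partial E_i$, lies in the normal plane at infinity: this is the framing used in the definition of $w_\infty(\Sigma)$. The standard identification of the self-linking of $L_R$ with the algebraic intersection number of a Seifert surface with the push-off of its boundary then gives $\#(\Sigma_R\pitchfork\Sigma'_R)=w_\infty(\Sigma)$. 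Combining with the previous step,
\begin{equation}
w_\infty(\Sigma)\;=\;\sum_{X(p)=0}\mathrm{ind}(X,p)+2D_\Sigma\;=\;\frac{1}{2\pi}\int_\Sigma K^N+2D_\Sigma,
\end{equation}
which is the claimed formula.

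The main obstacle is the final identification of $\#(\Sigma_R\pitchfork\Sigma'_R)$ with $w_\infty(\Sigma)$: one must check that resolving the double points of $\Sigma_R$ does not alter the boundary link $L_R$ nor its framing, that signs and orientations are consistent with the orientation convention on $N\Sigma$ used for $K^N$, and that the framing $X|_{L_R}$ is homotopic through normal framings of $L_R$ on $\mathbb{S}(0,R)$ to the one given by vectors in the normal planes at infinity (so that the self-linking is indeed $w_\infty(\Sigma)$ rather than some twisted version). A secondary technical point is justifying the vanishing of the Gauss--Bonnet boundary term at infinity, which follows from the asymptotic parametrization (\ref{la parametrisation du bout}) but requires an estimate on the decay of the second fundamental form of the end.
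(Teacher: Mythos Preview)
Your proof is correct and follows essentially the same strategy as the paper: count the zeros of a normal section that is asymptotically constant on each end, once via Gauss--Bonnet for $N\Sigma$ (the paper writes the connection form $\omega=-\|X^N\|^{-2}\langle\nabla X^N,JX^N\rangle$ explicitly and applies Stokes, with the boundary term $\int_{L_R}\omega\to 0$), and once via the intersection of $\Sigma_R$ with its push-off $\hat{\Sigma}_R$, which recovers $w_\infty(\Sigma)$ as a linking number and picks up $2D_\Sigma$ from the double points. The paper skips your resolution step and uses the immersed $\Sigma_R$, $\hat{\Sigma}_R$ directly as $2$-chains to compute $lk(L_R,\hat{L}_R)$; your detour through an embedded Seifert surface is harmless but not needed.
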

	\begin{proof} 
		The vector fields $X_i$ defined on $\partial\Sigma_R$ are very close to belonging to $N\Sigma$ so we can define a section $X^N$ of $N\Sigma_R$ which is very close to $X_i$ on each end $E_i$. We compute the number of zeroes $Z(X^N)$ in two different ways. \\
		1) If $p\in\mathbb{S}(0,R)\cap\Sigma$, notice that $X^N$ is close to $T_p \mathbb{S}(0,R)$. Thus we can push $\Sigma_R$ in the direction of $X^N$ to get a surface $\hat{\Sigma}_R$ in $\mathbb{B}(0,R)$ bounded by $\hat{L}_R$. We recall that the linking number of two links $L_1$ and $L_2$ is the number of intersection points of two surfaces, each of them bounded by one of the $L_i$'s. Thus $lk(L_R,\hat{L}_R)$ is the number of intersection points between $\Sigma_R$ and  $\hat{\Sigma}_R$. \\
		If we denote by $Z(X^N)$ the number of zeroes of $X^N$, this means
		\begin{equation}\label{z avec le w}
			Z(X^N)=w_\infty(\Sigma)-2D_\Sigma	
		\end{equation}
		2) 
		We let $J$ be the complex structure compatible with the metric and orientation on $N\Sigma$ and define a connection form.  $$\omega=-\frac{1}{\|X^N\|^2}<\nabla X^N, JX^N>.$$ 
		
		Note that $d\omega=K^NdA$, where $dA$ is the area element on $\Sigma$. We apply Stokes' theorem to $\omega$ on $\Sigma_R\setminus \cup_p B(p,\epsilon)$ where $p$ goes through the double points of $\Sigma_R$; we let $\epsilon$ tend to $0$ and get
		\begin{equation}\label{z avec stokes}
		\frac{1}{2\pi}\int_{\Sigma_R}K^N=\frac{1}{2\pi}\int_{L_R}\omega+Z(X^N)
		\end{equation}
		As $R$ becomes large, $X^N$ is asymptotic to the constant vector $X_i$ on each end $E_i$, thus
		$$\lim_{R\longrightarrow\infty}\int_{L_R}\omega=0$$
		Putting together (\ref{z avec le w}) and (\ref{z avec stokes}) yields the theorem.
	\end{proof}

		\section{Preliminaries: the torus}
		\subsection{A $2$-torus and its Weierstrass $\wp$-function}\label{section sur les tores}
		We let $\tau$ be a complex number  with $Im(\tau)>0$ and  
		consider the torus $T_\tau$ defined as the quotient of $\mathbb{C}$ by the lattice $\{m+n\tau\slash (m,n)\in\mathbb{Z}^2\}$.  We let $\wp$ be the Weierstrass function on $T_\tau$, namely
		$$\wp(z)=\frac{1}{z^2}+\sum_{(m,n)\neq(0,0)}\Big(\frac{1}{(z+m+n\tau)^2}-\frac{1}{(m+n\tau)^2}\Big).$$ 
		The meromorphic function $\wp$ is  doubly periodic for the periods $1$ and $\tau$ and has a single pole in a fundamental domain of $T_\tau$. \\
		We let $\gamma_1$ (resp. $\gamma_2$) be the $1$-cycle on $T_\tau$ corresponding to $1$ (resp. $\tau$) and introduce the periods of $\wp$, namely

		\begin{equation}
		\eta_1=-\int_{\gamma_1}\wp\ \ \ \ \ \ \ \ 
		\eta_2=-\int_{\gamma_2}\wp
		\end{equation}
		The Legendre relation connects $\tau, \eta_1$ and $\eta_2$
		\begin{equation}\label{relation de Legendre}
		\tau\eta_1-\eta_2=2\pi i
		\end{equation}
		Moreover we have
		\begin{equation}
		\label{equa diff de weierstrass}
		\wp'^2=4\wp^3-g_2\wp-g_3.
		\end{equation}
		We derive
		\begin{equation}\label{p seconde}
		\wp^2=\frac{\wp''}{6}+\frac{g_2}{12}
		\end{equation}
		This means that, for $i=1,2$
		\begin{equation}\label{integrale de p au carre}
		\int_{\gamma_1}\wp^2=\frac{g_2}{12}\ \ \ \ 
		\int_{\gamma_2}\wp^2=\frac{g_2}{12}\tau
		\end{equation}

	\subsection{The $4$ meromorphic functions which define the minimal tori: precisions and assumptions}\label{conditions sur les quatre fonctions}
	As explained in \S \ref{the tools}, the torus $T_\tau$ is parametrized by $(e+\bar{f},g+\bar{h})$ where $e,f,g,h$ have a common pole at $0$ and verify $e'f'+g'h'=0$.\\
	We have assumed that the total curvature of the surface is $\int_{T_\tau}K=-8\pi$.  On the other hand   the end of the surface is parametrized by\\ $z\mapsto(z^N+o(|z^N|),o(|z^N|))$ (cf. (\ref{la parametrisation du bout})) for some $N$; thus Jorge-Meeks formula ([7]) yields
	\begin{equation}
	-8\pi=\frac{1}{2\pi}\int_{T_\tau}K=\chi(T_\tau-\{0\})-N=-1-N
	\end{equation}
	So $N=3$ and the poles of $e',f',g',h'$ have order at most $4$.
	Without loss of generality, we assume

	\begin{ass}
		The function $e'$ has a pole of order $4$ whereas $g'$ and $h'$ have poles of order $\leq 3$.
	\end{ass}
Thus the identity $e'f'+g'h'=0$ ensures that $f'$ has a pole of order at most $2$ at $0$, unless it is constant, so on a fundamental domain $\Delta$ containing $0$, $f'$ is of the form $$f'(z)=\frac{\mu}{z^2}+\frac{\lambda}{z}+[z]$$
where $\lambda,\mu$ are complex numbers and $[z]$ is a power series in $z$.\begin{itemize}
	\item 
 If $\mu= 0$, then $f'$ is constant. 
 \item 
 If $\mu\neq 0$, $f'-\mu\wp$ has at most a single pole of order $1$, so it is a constant function and we have $$f'=\mu\wp+\phi$$
for some complex number $\phi$.
\end{itemize}
So, possibly after a change of coordinates in $\mathbb{C}^2$,  we can make the assumption
\begin{ass}\label{assumption: les deux types}
The function $f'$ is of one of the following types
\begin{itemize}
	\item 
	type (I): $f'=1$
	\item 
	type (II): $f'=\wp+s$, for some $s\in\mathbb{C}$
\end{itemize}
\end{ass}
We will show see that Type (I) is either impossible or at least very unusual.

		\section{The square torus}
		The {\it square torus} $T_i$ is given by periods $1$, $i$. Moreover
		\begin{equation}
			\eta_1=\pi\ \ \ \ \ \ \ \ 
			\eta_2=-i\pi\ \ \ \ \ \ \ \ g_3=0
		\end{equation}
			We deform the Chen-Gackstatter minimal square torus in $\mathbb{R}^3$ into a family of minimal tori in $\mathbb{R}^4$ and get
		\begin{thm}\label{theorem sur le square torus}
			Let $\wp$ be the Weierstrass function on the square torus and let $A$ be the constant defined by Chen-Gackstatter in dimension $3$, namely
			\begin{equation}\label{valeur de la constante A}
				A^2=\frac{3\pi}{2g_2}
			\end{equation} 
			\begin{enumerate}
				\item 
				We identify $\mathbb{R}^4$ with $\mathbb{C}^2$. For each non zero complex number $\lambda$, the following is a minimal map from the square torus $T_i\longrightarrow\mathbb{R}^4$
				\begin{equation}\label{4D tori}
					z\mapsto\Big( 
					A^2\int_{z_0}^z (4\wp^2(w)-g_2)dw-\overline{\int_{z_0}^z\wp(w)dw},\lambda A\wp(z)+\frac{\bar{A}}{\overline{\lambda}}\bar{\wp}(z) \Big)
				\end{equation}
				\begin{equation}\label{solution sur le tore carre}
					\mbox{i.e.}\ \ \ \ \ \ z\mapsto\Big( 
					\frac{2}{3}A^2(\wp'(z) -g_2z)+\bar{\zeta}(z),\lambda A\wp(z)+\frac{\bar{A}}{\overline{\lambda}}\bar{\wp}(z) \Big)
				\end{equation}
				where $\zeta$ is the Weierstrass zeta function.\\
				This is the Chen-Gackstatter torus inside $\mathbb{R}^3$ if and only if $|\lambda|=1$.
				
				\item 
				The maps (\ref{4D tori}) are the only non holomorphic proper  minimal immersions of the square torus in $\mathbb{R}^4$ with curvature $-8\pi$ and a single end. 
			\end{enumerate}
		\end{thm}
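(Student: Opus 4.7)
The plan splits into two parts matching the theorem's structure. For Part 1, I would directly verify that the map (\ref{4D tori}) satisfies the conformality equation (\ref{les derivees des quatre fonctions}) and the period conditions (\ref{les periodes 20231}). Reading off (\ref{4D tori}) gives $e'(z) = A^2(4\wp^2 - g_2)$, $f'(z) = -\wp(z)$ (using $\zeta' = -\wp$), $g'(z) = \lambda A\wp'(z)$, and $h'(z) = (A/\lambda)\wp'(z)$. Using the Weierstrass ODE (\ref{equa diff de weierstrass}),
\begin{equation*}
e'f' + g'h' = -A^2\wp(4\wp^2 - g_2) + A^2\wp'^2 = -A^2 g_3,
\end{equation*}
which vanishes since $g_3 = 0$ on the square torus. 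The second-coordinate period conditions are automatic because $g'$ and $h'$ are multiples of $\wp'$. For the first coordinate, (\ref{integrale de p au carre}) together with $\int_{\gamma_i}\wp = -\eta_i$ reduces the period condition to $-(2A^2 g_2/3)\omega_i + \bar\eta_i = 0$ with $\omega_1 = 1$ and $\omega_2 = \tau = i$; the values $\eta_1 = \pi$, $\eta_2 = -i\pi$ then produce exactly (\ref{valeur de la constante A}), the two cycles being consistent via (\ref{relation de Legendre}). The $\mathbb{R}^3$-valued reduction is equivalent to the second coordinate taking values in a real line of $\mathbb{C}$; since $A$ is real, a short calculation with the coefficients $\lambda A$ and $A/\bar\lambda$ shows this happens iff $|\lambda| = 1$.

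For Part 2 (uniqueness), I would apply the setup of \S\ref{conditions sur les quatre fonctions} and the Type~(I)/Type~(II) dichotomy of Assumption~\ref{assumption: les deux types}. In Type~(I) ($f' = 1$), the pole-order-$4$ requirement on $e' = -g'h'$ forces $g' = \beta\wp + \gamma$ and $h' = \beta'\wp + \gamma'$ with $\beta\beta' \neq 0$. The second-coordinate period conditions on the square torus give $\gamma = \pi\bar\beta'$ and $\gamma' = \pi\bar\beta$; substituting into the sum of the two first-coordinate period conditions produces $\pi^2(|\beta|^2 + |\beta'|^2) = -1$, which is impossible. This rules out Type~(I). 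In Type~(II) ($f' = \wp + s$), I would expand $e' = a\wp^2 + b\wp' + c\wp + d$ and $g', h' \in \mathrm{span}(\wp', \wp, 1)$, project the conformality equation onto the basis $\{1, \wp, \wp', \wp^2, \wp\wp', \wp^3\}$ (using (\ref{equa diff de weierstrass}) to reduce $\wp'^2$), and impose the four complex period conditions arising from (\ref{integrale de p au carre}). Combining the constant-term and $\wp^2$-coefficient of the conformality equation with the period relations yields the Hermitian identity $g_2\,s\alpha\alpha' + 12\pi^2\,\overline{s\alpha\alpha'} = 0$; since $g_2$ is real positive and $g_2 \neq 12\pi^2$ on the square torus, this forces $s = 0$. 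A short cascade then kills $b, c, \beta, \beta', \gamma, \gamma'$, leaving $a + 4\alpha\alpha' = 0$, $d = g_2\alpha\alpha'$, and $\alpha\alpha' = A^2$, which is precisely the family (\ref{4D tori}) with $\lambda = \alpha/A$.

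The main obstacle is the bookkeeping in the uniqueness analysis of Part 2. For Type~(I) the delicate step is expressing $\gamma, \gamma'$ in terms of $\bar\beta', \bar\beta$ from the second-coordinate periods and then extracting the sign-contradictory identity $\pi^2(|\beta|^2+|\beta'|^2) = -1$ from the first-coordinate ones. For Type~(II) the four linear period equations and the six quadratic conformality equations must be combined in the right order to produce the Hermitian identity, and then cascaded carefully through the remaining unknowns. A secondary technical point is verifying $g_2 \neq 12\pi^2$ for the square torus, a numerical or closed-form fact about the Eisenstein series at $\tau = i$ on which the Type~(II) argument depends in an essential way.
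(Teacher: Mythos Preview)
Your proposal is correct and follows the same overall architecture as the paper: Part~1 by direct verification of conformality and periods (the paper simply says this ``follows immediately from (\ref{les derivees des quatre fonctions})''), and Part~2 via the Type~(I)/(II) dichotomy, both cases ultimately reducing to the numerical fact $g_2\neq 12\pi^2$ on the square torus.

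The algebraic routes differ slightly. For Type~(I), you specialize directly to $\tau=i$ and extract the sign contradiction $\pi^2(|\beta|^2+|\beta'|^2)=-1$, whereas the paper invokes its general Proposition~\ref{les solutions de type (I)} (whose condition (\ref{condition pour le type I}) becomes $|g_2|=12\pi^2$ at $\tau=i$). For Type~(II), you combine the constant and $\wp^2$ coefficients of the conformality relation with the period identities to obtain the Hermitian relation $g_2\,stw+12\pi^2\,\overline{stw}=0$, which kills $s$ in one step; the paper instead first uses the $\wp$-coefficient to show $a,d$ are real, then combines the $\wp^2$ and constant coefficients to reach $(d-\pi)c=a\pi^2\bar c$ and the same contradiction $\pi^2=\pm g_2/12$. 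After $s=c=0$, the cascade to $u=y=v=z=b=0$ and $tw=A^2$ is identical. Your path is marginally more direct; the paper's has the advantage that its Type~(I) analysis is already done in generality. For the needed inequality, note that $g_2>\tfrac{4\pi^4}{3}>12\pi^2$ since $\pi^2>9$, so no numerics are required.
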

	We recall
		\begin{thm}\label{unicite de cg}([9],[13])
		The Chen-Gackstatter torus is the only minimal torus of total curvature $-8\pi$ and one end in $\mathbb{R}^3$.
		\end{thm}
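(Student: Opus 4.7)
The theorem has an existence part (1) and a uniqueness part (2); the latter is the substantive one.

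For existence, I would read off from (\ref{4D tori}) the four holomorphic data in the sense of (\ref{les quatre fonctions en general}): $e'=A^2(4\wp^2-g_2)$, $f'=-\wp$, $g'=\lambda A\wp'$, $h'=(A/\lambda)\wp'$. The conformality identity (\ref{les derivees des quatre fonctions}) reduces, via (\ref{equa diff de weierstrass}), to $-A^2 g_3$, which vanishes because $g_3=0$ on $T_i$. The periods of $\wp'$ being zero, (\ref{les periodes 20231}) is automatic for the second coordinate; for the first, the data of \S\ref{section sur les tores} ($\eta_1=\pi$, $\eta_2=-i\pi$, (\ref{integrale de p au carre})) show that the $\gamma_1$- and $\gamma_2$-periods vanish precisely when $A^2=3\pi/(2g_2)$, which is (\ref{valeur de la constante A}). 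To determine when the image lies in $\mathbb{R}^3$, observe that the first coordinate generically spans $\mathbb{C}$, so the condition reduces to the second coordinate $\lambda A\wp+(\bar A/\bar\lambda)\bar\wp$ staying on a fixed real line of $\mathbb{C}$. Decomposing $\wp$ into real and imaginary parts reduces this to the $\mathbb{R}$-proportionality of $\lambda A+\bar A/\bar\lambda$ and $i(\lambda A-\bar A/\bar\lambda)$, which a short computation shows to be equivalent to $(|\lambda|^2-|\lambda|^{-2})|A|^2=0$, i.e.\ to $|\lambda|=1$. In that case $\bar A/\bar\lambda=\lambda\bar A$ and the second coordinate equals $2\lambda\operatorname{Re}(A\wp)$, which lies in $\mathbb{R}\lambda\cong\mathbb{R}$.

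For uniqueness, I would set up and solve the real-algebraic system outlined in \S\ref{conditions sur les quatre fonctions}. Up to a linear change of coordinates in $\mathbb{C}^2$, Assumption \ref{assumption: les deux types} reduces the analysis to Type (I) ($f'=1$) or Type (II) ($f'=\wp+s$). The pole-order constraints force the Ansatz $e'=a\wp^2+b\wp'+c\wp+d$, $g'=A_1\wp'+B_1\wp+C_1$, $h'=A_2\wp'+B_2\wp+C_2$. Substituting into (\ref{les derivees des quatre fonctions}) and reducing $(\wp')^2$ by (\ref{equa diff de weierstrass}) gives six complex polynomial identities, one per pole order at $0$; specializing the four complex period equations (\ref{les periodes 20231}) to $T_i$ ($\tau=i$, $\eta_1=\pi$, $\eta_2=-i\pi$, $g_3=0$) adds four more, and in particular the period equations applied to $g',h'$ yield $C_1=\pi\bar B_2$ and $C_2=\pi\bar B_1$. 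In Type (II), the $\wp^3$-relation gives $a=-4A_1A_2$; the $\wp\wp'$-, $\wp^2$- and $\wp'$-relations then successively determine $b,c$, and the period equations for $e'$ express $d$ and give $c=\bar s/\pi$. Substituting the $C_i$'s back into the remaining $\wp$-term and constant-term conformality equations produces a coupled quadratic system in $A_1A_2,s,B_1,B_2$; exploiting the positivity of $g_2$ on $T_i$ forces $B_1=B_2=0$ and $s=0$, leaving only $A_1A_2=A^2$. Parametrizing $A_1=\lambda A$, $A_2=A/\lambda$ for $\lambda\in\mathbb{C}^*$ then exhausts the solution set and recovers (\ref{4D tori}). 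Type (I) is ruled out separately: with $f'=1$ the period equations give $\int_{\gamma_1}e'=-1$ and $\int_{\gamma_2}e'=i$, which combined with $e'=-g'h'$ and the pole-order-$4$ constraint on $e'$ produce an incompatibility.

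The main obstacle is the elimination step forcing $B_1=B_2=s=0$: the system remaining after the leading-order substitutions and the period-induced elimination of $C_1,C_2$ is a coupled non-linear real-algebraic system, and ruling out parasitic non-trivial solutions requires combining the reality constraints coming from $g_2>0$ with the surviving conformality equations. If a closed-form contradiction proves elusive, the explicit numerical value of $g_2$ on $T_i$ can be invoked, as in the Sagemath-assisted verifications alluded to in \S\ref{the tools}.
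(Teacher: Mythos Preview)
You have addressed the wrong theorem. The statement in question, Theorem~\ref{unicite de cg}, is the classical $\mathbb{R}^3$ uniqueness of the Chen--Gackstatter torus; the paper does not prove it but simply quotes it from [9] and~[13]. There is no existence/uniqueness split, no parameter $\lambda$, and no ambient $\mathbb{R}^4$ in that statement.

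Your entire proposal --- the data $e'=A^2(4\wp^2-g_2)$, $g'=\lambda A\wp'$, $h'=(A/\lambda)\wp'$, the $|\lambda|=1$ characterisation, the Type~(I)/(II) dichotomy, and the elimination forcing $B_1=B_2=s=0$ --- is instead an attempt at Theorem~\ref{theorem sur le square torus}, the paper's own $\mathbb{R}^4$ classification on the square torus. For that theorem your outline is in the right spirit and close to the paper's argument in \S\ref{unicity for the square torus}, but the paper's elimination is sharper and does not need any numerical assistance: from the period relations on $T_i$ one gets $s=\pi\bar c$, $z=\pi\bar u$, $v=\pi\bar y$ and $d+\tfrac{g_2}{12}a=\pi$; substituting into the conformality system and using that $g_2$ is real forces $a,d\in\mathbb{R}$, and then the third and sixth equations combine to $(d-\pi)c=a\pi^2\bar c$, which for $c\neq 0$ would give $\pi^2=\pm g_2/12$, a contradiction. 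This kills $c$ (hence $s$), then $u,v,y,z,b$ in turn, leaving only the one-parameter family~(\ref{4D tori}).
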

	The existence (1. Th. \ref{theorem sur le square torus}) follows immediately from (\ref{les derivees des quatre fonctions}); note that the period conditions are trivially satisfied. The unicity (2. Th. \ref{theorem sur le square torus}) will be proved in \S \ref{unicity for the square torus}.\\

		\section{General case: a system of algebraic equations}
		We consider a torus $T_\tau$ as in \S \ref{section sur les tores} and a minimal map $F:T_\tau\longrightarrow\mathbb{C}^2$ of the form
		\begin{equation}\label{expression de F}
		F=(e+\bar{f},g+\bar{h})
		\end{equation}
		verifying the assumptions of \S \ref{conditions sur les quatre fonctions}; without loss of generality,  we now  add the following assumption to the list in \S \ref{conditions sur les quatre fonctions}:
		\begin{ass}\label{assumption on the second coordinate}
		The second coordinate of $F$ is not of the form $u\wp'+\bar{v}\overline{\wp'}$.
	\end{ass}
Indeed, if a surface is parametrized by $(e+\bar{f},u\wp+\bar{v}\overline{\wp})$, we introduce $w$ with $w^2=uv$; then  
		$(e+\bar{f},
		w\wp+\bar{w}\overline{\wp})$ is also a minimal torus with one pole and curvature $-8\pi$. Moreover, it sits inside $\mathbb{R}^3$ so it is the Chen-Gackstatter surface on the square torus (Theorem \ref{unicite de cg}).

\subsection{Type (I)}
In Assumption \ref{assumption: les deux types} above, we distinguished between types (I) and (II). We now investigate type (I) surfaces, i.e. those with $f'=1$.
\begin{prop}\label{les solutions de type (I)}

	For a given torus $T_\tau$, there is a type (I) solution to the Main Problem if and only $g_2\neq 0$ and 
	\begin{equation}\label{condition pour le type I}
	|g_2|=12|\eta_1\bar{\eta_1}-\frac{\pi}{Im\tau}(\eta_1+\bar{\eta_1})|
	\end{equation}

	\end{prop}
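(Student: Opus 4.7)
The plan is to translate the Type~(I) Main Problem into a finite algebraic system via the period conditions, eliminate all but one real parameter, and verify that the surviving scalar equation is exactly (\ref{condition pour le type I}).

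First I would reduce the ansatz. Since $f'=1$, conformality (\ref{les derivees des quatre fonctions}) gives $e'=-g'h'$. Meromorphic functions on $T_\tau$ with unique pole of order $\le 3$ at $0$ span $\langle 1,\wp,\wp'\rangle$, so write $g'=a+b\wp+c\wp'$ and $h'=d+e\wp+f\wp'$. Since $e'$ has pole of order exactly $4$, the product $g'h'$ must kill the pole-$5$ and pole-$6$ contributions from $\wp\wp'$ and $\wp'^2=4\wp^3-g_2\wp-g_3$; the resulting relations $cf=0$, $bf+ce=0$, together with $be\neq 0$, force $c=f=0$, so $g'=a+b\wp$ and $h'=d+e\wp$ with $b,e\neq 0$. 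Using $\wp^2=\wp''/6+g_2/12$ then gives
\begin{equation*}
e'=-ad-\tfrac{beg_2}{12}-(ae+bd)\wp-\tfrac{be}{6}\wp''.
\end{equation*}

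Second I would write out the four period equations (\ref{les periodes 20231}) using $\int_{\gamma_i}\wp\,dz=-\eta_i$ and $\int_{\gamma_i}\wp''\,dz=0$. The two ``$e+\bar f$'' equations, combined via the Legendre relation $\tau\eta_1-\eta_2=2\pi i$, eliminate the expression $ad+(g_2/12)be$ and yield the clean identity $ae+bd=-\mathrm{Im}(\tau)/\pi$. The two ``$g+\bar h$'' equations, treated similarly, solve linearly for $a$ and $\bar d$:
\begin{equation*}
a = b(\eta_1-p) + \bar e\,p, \qquad \bar d = bp + \bar e(\bar\eta_1-p), \qquad p := \tfrac{\pi}{\mathrm{Im}\,\tau}.
\end{equation*}

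Third I would substitute these into the two remaining relations and set $\mu := be$, $m := |b|^2+|e|^2$. The identity $ae+bd=-1/p$ becomes $2\mu(\eta_1-p)+pm = -1/p$; its imaginary part forces $\mu(\eta_1-p)=r\in\mathbb{R}$ and its real part pins $m = -1/p^2 - 2r/p$. Plugging $\mu = r\,\overline{(\eta_1-p)}/|\eta_1-p|^2$ and $pm=-1/p-2r$ into $ad + (g_2/12)be = 1 - \eta_1/p$, the terms proportional to $m$ and $\mu(\eta_1-p)^2$ collapse to cancel $1-\eta_1/p$ exactly, leaving $\mu g_2/12 = r(\eta_1-p) - p^2\bar\mu$. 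Dividing by $r\neq 0$ (the case $r=0$ would force $\mu=0$, contradicting $be\neq 0$) and simplifying produces
\begin{equation*}
\tfrac{g_2}{12}\,\overline{(\eta_1-p)} = (\eta_1-p)\bigl(|\eta_1|^2 - p(\eta_1+\bar\eta_1)\bigr).
\end{equation*}
Taking moduli, and using $|\overline{\eta_1-p}|=|\eta_1-p|$, gives exactly (\ref{condition pour le type I}); in particular $g_2\neq 0$ follows by the same equation together with $|\eta_1|^2\neq p(\eta_1+\bar\eta_1)$. For the converse one checks that under the displayed equation the real parameter $r$ ranges in an open interval on which $m>0$ and $m\ge 2|\mu|$ both hold, and the quadratic system $be=\mu$, $|b|^2+|e|^2=m$ then admits solutions with $b,e\neq 0$, producing genuine Type~(I) immersions.

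The main obstacle I expect is the bookkeeping in the elimination step: the conjugation pattern in the formulas for $a,\bar d$, combined with the expansions of $ad$ and $ae+bd$ in terms of $\mu,\bar\mu$ and $m$, generates several cross-terms whose exact cancellation against $1-\eta_1/p$ is the crux. A secondary subtlety is the degenerate case $\eta_1=p$ (equivalently $\eta_1\in\mathbb{R}$ with the prescribed value), where the linear system for $(a,\bar d)$ degenerates and the identity $ae+bd=-1/p$ forces $m<0$, so no solution exists and the case must be excluded from the outset.
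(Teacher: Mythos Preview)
Your proposal is correct and follows essentially the same route as the paper: reduce $g',h'$ to affine functions of $\wp$ via conformality, extract the period relations, solve two of them for the constant terms, and collapse the remaining pair to a single scalar identity whose modulus is (\ref{condition pour le type I}). The only cosmetic difference is that you parametrize the final two-variable system by $\mu=be$ and $m=|b|^2+|e|^2$, whereas the paper writes $a\alpha=|a\alpha|e^{i\theta}$ and argues with $\theta$; your displayed equation $\tfrac{g_2}{12}\overline{(\eta_1-p)}=(\eta_1-p)\bigl(|\eta_1|^2-p(\eta_1+\bar\eta_1)\bigr)$ is exactly the paper's relation $\tfrac{g_2}{12}e^{2i\theta}=|\eta_1-p|^2-p^2$ after clearing $|\eta_1-p|^2$.
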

We do not know if there is any torus satisfying (\ref{condition pour le type I}); if there are any, they are not generic.

\begin{proof}
		We look for $e',f',g',h'$ of the form 
	$$e'=S\wp^2+T\wp'+U\wp+V\ \ \ f'=1\ \ \ g'=\lambda\wp'+\alpha\wp+\beta \ \ \ h'=\mu\wp'+a\wp+b$$
	where $S,T,U,V,\lambda,\mu,a,b,\alpha,\beta$ are complex numbers and $S\neq 0$.\\
	We write the identity $e'f'+g'h'=0$ and get rid of the term in $\wp'^2$ by using 
	(\ref{equa diff de weierstrass}). Then we identify to zero the coefficients of $1$, $\wp$, $\wp'$, $\wp^2$, $\wp^3$, $\wp\wp'$ and derive the system
	\begin{equation}\label{les grandes lettres}
	\left  \{\begin{array}{c}
		\lambda=\mu=0\\
	S+\alpha a=0\\
	T=0\\
	U+\alpha b+\beta a=0 \\
	V+\beta b=0
	\end{array} \right.
	\end{equation}
	The period equations (\ref{les periodes 20231}) for the first components
	\begin{equation}\label{period first componente petit}
	S\frac{g_2}{12} -U\eta_1+V+1=0\ \ \ \ \ \ \ \ \ \
	S\frac{g_2}{12}\tau-U\eta_2+V\tau+\bar{\tau}=0
	\end{equation}
	
	Using Legendre's equation (\ref{relation de Legendre}) we rewrite the second equation of (\ref{period first componente petit}) as
	\begin{equation}
	\label{deuxieme forme de 26}
(S\frac{g_2}{12} -U\eta_1+V)\tau+2\pi i U+\bar{\tau}=0
	\end{equation}
	We derive $U=\dfrac{1}{\pi}Im\tau$; and using (\ref{les grandes lettres}), we rewrite (\ref{period first componente petit}) 
	\begin{equation}\label{periode;la premiere composante;bis}
	\left  \{\begin{array}{c}
	-\alpha a\frac{g_2}{12}-\beta b- \frac{Im\tau}{\pi}\eta_1+1=0\\
	\\
	\alpha b+\beta a=-\frac{Im\tau}{\pi}
	\end{array} \right.
	\end{equation}
	The period of the second component is 
	\begin{equation}\label{deuxieme periode; petit}
	(\star)\ \ \  -\alpha\eta_1+\beta-\bar{a}\bar{\eta_1}+\bar{b}=0\ \ \ \ \ (\star\star)-\alpha\eta_2+\beta\tau-\bar{a}\bar{\eta_2}+\bar{b}\bar{\tau}=0
	\end{equation}
	$$\mbox{i.e.}\ \ \ (\beta-\alpha\eta_1)(\tau-\bar{\tau})=2\pi i(\bar{a}-\alpha)\ \ \ 
	(\bar{a}\bar{\eta_1}-\bar{b})(\tau-\bar{\tau})=2\pi i(\bar{a}-\alpha)$$
	so we rewrite (\ref{deuxieme periode; petit}) as\ \ \ \ 
	$\label{beta et mu}
	\left  \{\begin{array}{c}
	\beta=\Big(\frac{\pi}{Im\tau}\Big)\bar{a}+\Big(\eta_1-\frac{\pi}{Im\tau}\Big)\alpha\\
	b=\Big(\frac{\pi}{Im\tau}\Big)\bar{\alpha}+\Big(\eta_1-\frac{\pi}{Im\tau}\Big)a
	\end{array} \right.
	$\\ 
	
We plug these expressions of $b$ and $\beta$ into (\ref{periode;la premiere composante;bis})
	and  derive
	\begin{equation}\label{deux equations en alpha et a}
	\left  \{\begin{array}{c}
	(1)\ \ \ \ \ \  \frac{\pi}{Im\tau}(|a|^2+|\alpha|^2)+
	2(\eta_1-\frac{\pi}{Im\tau})a\alpha+\frac{Im\tau}{\pi}=0 
	\\
	\\
	(2)\ \ \ \ \  a\alpha[-\frac{g_{2}}{12}+(\eta_1-\frac{\pi}{Im\tau})^2]-(\frac{\pi}{Im\tau})^2\bar{a}\bar{\alpha}=0 \end{array} \right.
	\end{equation}
We let
$a\alpha=|a\alpha|e^{i\theta}$, 
and derive from (1) of (\ref{deux equations en alpha et a}) that $(\eta_1-\frac{\pi}{Im\tau})e^{i\theta}$ is a negative real number. Thus (2) in (\ref{deux equations en alpha et a}) yields
$\dfrac{g_{2}}{12}e^{2i\theta}=|\eta_1-\dfrac{\pi}{Im\tau}|^2-(\dfrac{\pi}{Im\tau})^2$ and the lemma follows.\ \ \end{proof}

\subsection{Type (II)}
\begin{prop}\label{thm type II 2023}
	The torus $T$ admits a type (II) solution to the Main Problem if there are complex numbers $a,b,c,d,s,t,u,v,w,y,z$
	which verify the folowing systems of equations
		\begin{equation}\label{premier systeme avec six}
		\left\{ \begin{array}{c}
			
			a+4tw=0\\
			b+ty+uw=0\\
			c+as+uy=0\\
			d+cs+uz+vy-g_2tw=0\\
			bs + vw + tz=0\\
			ds  + vz-g_3tw=0
		\end{array} \right.
	\end{equation}
\begin{equation}\label{periode du premier 2023. bis1}
	\left\{ \begin{array}{c}
		c-1+\dfrac{Im\tau}{\pi}(\bar{\eta}-\bar{s})=0\\
		c(1-\dfrac{Im\tau}{\pi}\eta)+\dfrac{Im\tau}{\pi}(a\dfrac{g_2}{12}+d)-1=0
	\end{array} \right.
\end{equation}
\begin{equation}\label{periode du second 2023. bis1}
	\left\{ \begin{array}{c}
		u+\bar{y}(\dfrac{Im\tau}{\pi}\bar{\eta}-1)-\dfrac{Im\tau}{\pi}\bar{z}=0\\
		u(1-\dfrac{Im\tau}{\pi}\eta)+\dfrac{Im\tau}{\pi}v-\bar{y}=0
	\end{array} \right.
\end{equation}
	\end{prop}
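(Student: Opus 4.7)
The plan is to construct the minimal immersion explicitly from the given complex numbers. I begin with the ansatz
\[
e' = a\wp^2 + b\wp' + c\wp + d, \qquad f' = \wp + s,
\]
\[
g' = t\wp' + u\wp + v, \qquad h' = w\wp' + y\wp + z,
\]
which, modulo Assumption \ref{assumption on the second coordinate}, is the most general choice compatible with the pole-order restrictions of \S \ref{conditions sur les quatre fonctions}: a pole of order $4$ at $0$ for $e'$, type (II) for $f'$, and poles of order at most $3$ for $g'$ and $h'$.

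The first step is to show that the conformality condition $e'f' + g'h' = 0$ is equivalent to (\ref{premier systeme avec six}). Expanding $e'f' + g'h'$ and using (\ref{equa diff de weierstrass}) to eliminate the $\wp'^2$ term arising from $g'h'$, one obtains a linear combination of the six functions $1, \wp, \wp^2, \wp^3, \wp', \wp\wp'$. Since $\mathbb{C}(T_\tau) = \mathbb{C}(\wp)[\wp']$ with $1, \wp, \wp^2, \wp^3$ linearly independent over $\mathbb{C}$, these six coefficients must vanish separately, and a direct comparison yields exactly the six equations of (\ref{premier systeme avec six}).

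The second step is to reduce the four period conditions (\ref{les periodes 20231}) to (\ref{periode du premier 2023. bis1}) and (\ref{periode du second 2023. bis1}). Using (\ref{integrale de p au carre}) together with $\int_{\gamma_1}\wp = -\eta$, $\int_{\gamma_2}\wp = -\eta_2$ and $\int_{\gamma_i}\wp' = 0$ (writing $\eta = \eta_1$), the period conditions for the first coordinate read
\[
\tfrac{g_2}{12}a - \eta c + d - \bar\eta + \bar s = 0, \qquad \tfrac{g_2}{12}a\tau - \eta_2 c + d\tau - \bar\eta_2 + \bar s\bar\tau = 0.
\]
Forming $\tau$ times the first equation minus the second and applying (\ref{relation de Legendre}) together with its conjugate to eliminate $\eta_2$ and $\bar\eta_2$, then dividing by $2i\, Im\tau$, produces the first equation of (\ref{periode du premier 2023. bis1}). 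Substituting the resulting expression for $\bar\eta - \bar s$ back into the first of the two displayed equations gives the second equation of (\ref{periode du premier 2023. bis1}). An analogous manipulation for the second-coordinate periods, where no $\wp^2$ terms appear, produces (\ref{periode du second 2023. bis1}). The only genuine obstacle is bookkeeping: carefully tracking signs, conjugations, and both the unconjugated and conjugated forms of Legendre's relation.

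Once the three systems are satisfied, the antiderivatives $e, f, g, h$ descend to meromorphic functions on $T_\tau \setminus \{0\}$ with the prescribed pole of order $3$ at $0$ in the first coordinate, so $F = (e+\bar f, g+\bar h)$ is a conformal harmonic map from $T_\tau\setminus\{0\}$ to $\mathbb{C}^2$ with a single end of order $3$; Jorge--Meeks then gives total curvature $-8\pi$, furnishing the claimed type (II) solution.
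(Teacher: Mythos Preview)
Your proof is correct and follows essentially the same route as the paper: same ansatz for $e',f',g',h'$, expansion of $e'f'+g'h'$ modulo $\wp'^2=4\wp^3-g_2\wp-g_3$ with coefficient-matching on $1,\wp,\wp^2,\wp^3,\wp',\wp\wp'$, and elimination of $\eta_2,\bar\eta_2$ from the period equations via Legendre. The only cosmetic differences are that the paper justifies linear independence by noting distinct pole orders at $0$, and derives the two equations of (\ref{periode du premier 2023. bis1}) by forming $(\star\star)-\tau(\star)$ and $(\star\star)-\bar\tau(\star)$ rather than by back-substitution; both computations are equivalent.
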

REMARK. We have $10$ equations in $11$ unknowns: it is easy to eliminate  $a,b,s,d,u,y$ or $a,b,c,d,u,y$ and get $4$ equations in $5$ unknowns.
\begin{proof}
	Consider a torus $T_\tau$ with parameters $\tau$, $g_2$, $g_3$, $\eta_1$ (see \S \ref{section sur les tores}) and let
	\begin{equation}
		e'=a\wp^2+b\wp'+c\wp+d\ \ \ \ \ f'=\wp+s\ \ \ \ \ g'=t\wp'+u\wp+v\ \ \ \ h'=w\wp'+y\wp+z
	\end{equation}
	We write $e'f'+g'h'=0$ and use (\ref{equa diff de weierstrass}). We get an expression in $\wp^3$, $\wp\wp'$, $\wp^2$, $\wp'$, $\wp$ and $1$. These functions have poles of different orders at $0$ so the coefficient of each of them in $e'f'+g'h'=0$ is zero and we derive the system (\ref{premier systeme avec six}).\\
			Next, we consider the equations (\ref{les periodes 20231}) given by the periods.
	Using Legendre's equation, we write for the first coordinate
	\begin{equation}\label{periode du premier 20231}
		\left\{ \begin{array}{c}
			a\dfrac{g_2}{12}-c\eta_1+d-\overline{\eta_1}+\bar{s}=0\ \  \ \ \ \ \ \ \ \ \ \ (\star)\\
			a\dfrac{g_2}{12}\tau-c(\tau\eta_1-2i\pi)+d\tau-
			(\bar{\tau}\overline{\eta_1}+2i\pi)+\bar{s}\bar{\tau}=0 \ \  \ \ \ (\star\star)
		\end{array} \right.
	\end{equation}
	Computing $(\star\star)-\tau(\star)$ and $(\star\star)-\bar{\tau}(\star)$, we derive
(\ref{periode du premier 2023. bis1}).\\
		We proceed similarly for the second component and derive
	(\ref{periode du second 2023. bis1}). \ \ \end{proof}
\subsection{Unicity of the solutions for the square torus}\label{unicity for the square torus}
We prove here 2. of Theorem \ref{theorem sur le square torus}. Clearly there is no type (I) solution for the square torus so we look for type (II) solutions. 
 We derive from (\ref{periode du premier 2023. bis1}) and (\ref{periode du second 2023. bis1})
	\begin{equation}\label{tore carre.premierii}
	\left\{ \begin{array}{c}
		(1)\ \  s=\pi\bar{c}\\
		\ \ \ \ \ \ \ 	(2)\ \ d+\dfrac{g_2}{12}a=\pi\\
		(3)\ \  z=\pi\bar{u}\\
		(4)\ \  v=\pi\bar{y}
	\end{array} \right.
\end{equation} 
We plug in (1), (3) and (4) of (\ref{tore carre.premierii}) into the $4$-th equation of (\ref{premier systeme avec six}). We derive that $d+\dfrac{g_2}{4}a$ is real; hence $a$ and $d$ are both real (see (2) of (\ref{tore carre.premierii}) and recall that $g_2$ is real).  . \\
Using (\ref{tore carre.premierii}), we rewrite the $3$-rd and $6$-th equations of (\ref{premier systeme avec six}) as $c+a\pi\bar{c}+uy=0$ and $dc+\pi uy=0$, hence $(d-\pi)c=a\pi^2\bar{c}$. Thus, if $c\neq 0$, $d-\pi=\pm a\pi^2$ and $\pi^2=\pm \dfrac{g_2}{12}$, which is false (since $a\neq 0$).\\
We derive that $s=vz=uy=0$.  \\
Since $t$ and $w$ are not zero, we derive from the $5$-th equation in (\ref{premier systeme avec six}) that $v=z=0$. Thus we get $u=y=0$, and finally $b=0$ and we have the $4D$-Chen-Gackstatter torus. \ \ \ \ \ \qed
\section{Equianharmonic torus}
\begin{prop}\label{prop equianharmonic}
	Let $T_\tau$ be the equianharmonic torus, i.e. with $\tau=e^{\frac{2\pi i}{3}}$. There exists no minimal immersion of $T_\tau$ into $\mathbb{R}^4$ with one end and total curvature $-8\pi$. 
\end{prop}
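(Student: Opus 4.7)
The plan is to split the problem according to Assumption \ref{assumption: les deux types} and rule out Type (I) and Type (II) separately on the equianharmonic torus. Since $\tau=e^{2\pi i/3}$ gives $g_2=0$, and Proposition \ref{les solutions de type (I)} requires $g_2\neq 0$, the Type (I) case is immediate.

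For Type (II), the first step is to compute the Weierstrass data explicitly. With $g_2=0$ the ODE becomes $\wp'^2=4\wp^3-g_3$, which forces the $\mathbb{Z}/3$-symmetry $\wp(\tau z)=\tau^{-2}\wp(z)=\tau\wp(z)$ (using $\tau^3=1$). Changing variables $w=\tau z$ in $\eta_2=-\int_0^\tau\wp(w)\,dw$ then gives $\eta_2=\tau^2\eta_1$; combined with Legendre's relation (\ref{relation de Legendre}) this yields $\eta_1=\dfrac{2\pi i}{\tau-\tau^2}=\dfrac{2\pi}{\sqrt 3}\in\mathbb{R}$. The decisive numerical coincidence is
\[
\frac{\mathrm{Im}\,\tau}{\pi}\,\eta_1=\frac{\sqrt 3/2}{\pi}\cdot\frac{2\pi}{\sqrt 3}=1.
\]

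The next step is to feed this identity into the period equations (\ref{periode du premier 2023. bis1}) and (\ref{periode du second 2023. bis1}): every factor $1-\frac{\mathrm{Im}\,\tau}{\pi}\eta_1$ vanishes and every factor $\frac{\mathrm{Im}\,\tau}{\pi}\bar\eta_1$ becomes $1$. Setting $k:=\frac{2\pi}{\sqrt 3}>0$, the four period equations collapse to the rigid relations $s=k\bar c$, $d=k$, $z=k\bar u$, $v=k\bar y$. Plugging these into the fourth equation of (\ref{premier systeme avec six}), which with $g_2=0$ reads $d+cs+uz+vy=0$, one obtains
\[
k\bigl(1+|c|^2+|u|^2+|y|^2\bigr)=0,
\]
a contradiction, since the left-hand side is a strictly positive sum of squares.

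The main obstacle is really just identifying the numerical coincidence $\frac{\mathrm{Im}\,\tau}{\pi}\eta_1=1$; once it is in hand the argument is essentially forced, and five of the six algebraic equations in (\ref{premier systeme avec six}) are never even used. It is striking that the same miracle that eliminates Type (I) globally (namely $g_2=0$) is precisely what makes the Type (II) period equations so rigid on this torus that the bare positivity of $k$ suffices to close the argument.
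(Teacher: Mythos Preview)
Your proof is correct and follows essentially the same route as the paper: rule out Type (I) via $g_2=0$, then for Type (II) exploit the coincidence $\dfrac{\mathrm{Im}\,\tau}{\pi}\eta_1=1$ to reduce the period equations to the four rigid relations, and plug these into the fourth equation of (\ref{premier systeme avec six}) to obtain a positive sum equal to zero. The only difference is cosmetic: you solve the period relations for $s,d,z,v$ (so all four terms carry the common factor $k=\pi/\mathrm{Im}\,\tau$), whereas the paper solves for $c,d,u,v$ and gets a mix of $\pi/\mathrm{Im}\,\tau$ and $\mathrm{Im}\,\tau/\pi$ coefficients; and you derive $\eta_1$ from the $\mathbb{Z}/3$-symmetry of $\wp$ while the paper simply quotes it.
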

\begin{proof}
	We recall that the invariants of $T_\tau$ are $Im\tau=\dfrac{\sqrt{3}}{2}$,
	$g_2=0$, $g_3=1$ and $\eta_1=\dfrac{2\pi}{\sqrt{3}}$ (see [12]).
	Clearly they do not satisfy the requirements for Type (I) so we investigate type (II). Equations (\ref{periode du premier 2023. bis1}) and (\ref{periode du second 2023. bis1}) give us
	\begin{equation}\label{tore carre.periodes}
		c=\dfrac{Im\tau}{\pi}\bar{s}\ \ \ \ \ \
			d=\dfrac{\pi}{Im\tau}\ \ \ \ \ \ 
			u=\dfrac{Im\tau}{\pi}\bar{z}\ \ \ \ \ \ 
		v=\dfrac{\pi}{Im\tau}\bar{y}
	\end{equation}
So the $4$-th equation of (\ref{premier systeme avec six}) yields
$\dfrac{Im\tau}{\pi}+\dfrac{Im\tau}{\pi}s\bar{s}
+\dfrac{Im\tau}{\pi}z\bar{z}+
\dfrac{\pi}{Im\tau}y\bar{y}=0$,
which is not possible.\ \ \end{proof}

\section{Rectangular tori}
\begin{thm}\label{thm:rectangular tori}
	Let $T_{Ri}$ be a rectangular torus, i.e. it is given by a lattice generated by $1,Ri$ with $R$ a positive real number. There exists  a non holomorphic proper minimal map ${\mathcal T}_R$ from  $T_{Ri}$ into $\mathbb{R}^4$ with one end and total curvature $-8\pi$. Its coordinates are rational functions in terms of 
	\begin{itemize}
		\item 
		$R$
		\item 
		the coefficients $g_2$, $g_3$ of the Weierstrass $\wp$-functions
		\item 
		the quasi-period $\eta_1$ of the $\zeta$ function on the 
		period $1$.
	\end{itemize}
	
\end{thm}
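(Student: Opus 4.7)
The plan is to construct the minimal torus by explicitly solving the type (II) system of Proposition \ref{thm type II 2023} specialized to $\tau = Ri$. The decisive simplification is that for $\tau = Ri$ the Weierstrass invariants $g_2$, $g_3$ and the period $\eta_1$ are all real, so every coefficient in the system is real. I would therefore make a reality ansatz compatible with the anti-holomorphic involution $\omega \mapsto \bar{\omega}$ of the rectangular torus.

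Specifically, I propose the ansatz $s \in \mathbb{R}$, $w = \bar t$, $y = \bar u$, $v = u\eta_1$, $z = \bar u \eta_1$, $b = 0$; the condition $b = 0$ forces $\operatorname{Re}(t\bar u) = 0$ via equation (2) of (\ref{premier systeme avec six}), which one arranges concretely by taking $t$ real and $u$ purely imaginary. Under this ansatz the period equations (\ref{periode du second 2023. bis1}) for the second coordinate are automatically satisfied, equation (5) of (\ref{premier systeme avec six}) becomes the tautology $2\eta_1 \operatorname{Re}(t\bar u) = 0$, and the remaining identities reduce to three real scalar equations in the three real unknowns $s$, $T := |t|^2 > 0$, $U := |u|^2 \geq 0$. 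Carrying out the elimination, equation (3) of (\ref{premier systeme avec six}) combined with the first equation of (\ref{periode du premier 2023. bis1}) gives $4Ts - U = 1 - R\eta_1/\pi + Rs/\pi$; equations (4) and (6) of (\ref{premier systeme avec six}) together yield $U(\eta_1 - s)^2 = T(4s^3 - g_2 s + g_3)$; and the second equation of (\ref{periode du premier 2023. bis1}) gives $T = 3R(\eta_1 - s)^2 /[2\pi(g_2 - 12 s \eta_1)]$. Eliminating $T$ and $U$ from these three relations produces the explicit formula
\[
s = \frac{2(\pi - R\eta_1) g_2 + 3R g_3}{R g_2 + 12 \eta_1 (2\pi - R\eta_1)},
\]
after which $T$, $U$, and the remaining coefficients $a, b, c, d$ are all rational functions of $R, g_2, g_3, \eta_1$. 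At $R = 1$ (where $\eta_1 = \pi$ and $g_3 = 0$) one recovers $s = 0$, $T = 3\pi/(2g_2) = A^2$, $U = 0$, matching the $|\lambda| = 1$ case of Theorem \ref{theorem sur le square torus}, namely the Chen--Gackstatter $\mathbb{R}^3$ torus, which is already non-holomorphic.

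The main obstacle is verifying positivity of $T$ and non-negativity of $U$ for every $R > 0$, which is what ensures that the formulas actually define an immersion rather than just a formal solution. Both reduce to sign analyses of the denominator $g_2 - 12 s \eta_1$ and of the numerator $4s^3 - g_2 s + g_3$ along the imaginary modular ray. I would argue by continuity anchored at $R = 1$ (where $g_2 - 12 s\eta_1 = g_2 > 0$ and $4s^3 - g_2 s + g_3 = 0$), using the classical monotonicity of the Weierstrass data of rectangular tori as a function of $R$ together with the fact that $g_3$ changes sign precisely at $R = 1$ in step with $s$; the Legendre relation (\ref{relation de Legendre}) is what pins down $\eta_2$ in this analysis. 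Non-holomorphicity for any isometric complex structure on $\mathbb{R}^4$ is then immediate from Proposition \ref{pas complexe}; for $R \ne 1$ the condition $U > 0$ moreover guarantees that the second coordinate $g + \bar{h}$ takes genuinely complex values, so the surface actually sits in $\mathbb{R}^4$ and not in an affine $\mathbb{R}^3$-subspace.
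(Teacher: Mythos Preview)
Your algebraic reduction is clean and your closed formula for $s$ is correct---in fact it coincides with the paper's value $s=\dfrac{1}{4T(R)}$ (cf.\ (\ref{valeur de gamma enfin}) and (\ref{definition de T})).  The gap is in the ansatz itself.  Imposing $y=\bar u$ forces, via the period equations (\ref{periode du second 2023. bis1}), $v=u\eta_1$ and $z=\bar u\eta_1$; your three scalar equations then give
\[
U=\frac{3R\,(4s^{3}-g_{2}s+g_{3})}{2\pi\,(g_{2}-12\eta_{1}s)}\,.
\]
This is \emph{negative} for every $R>1$: a first–order expansion at $R=1$ (where $s'(1)\approx 11.5$ and $g_{3}'(1)\approx 1690$) gives $4s^{3}-g_{2}s+g_{3}\approx(-g_{2}(1)\cdot 11.5+1690)(R-1)\approx -480(R-1)<0$ while $g_{2}-12\eta_{1}s\approx g_{2}(1)>0$; and a direct evaluation at $R=2$ gives $U\approx -0.99$.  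So your ansatz produces no solution off the square torus, and the continuity argument ``anchored at $R=1$'' actually shows the opposite of what you need.

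The paper's ansatz is different: all unknowns are taken \emph{real} with $w=t$, $y=-u$, $v=-z$; the period equations then force $v=u(\eta_{1}-\tfrac{2\pi}{R})$, not $v=u\eta_{1}$.  With this choice the analogue of your positivity condition becomes the inequality of Lemma~\ref{existence et mega equation1}, and proving it is the substantive content of the paper's argument: it is done by $q$-series bounds for $R\ge 1.15$, by a separate estimate of the quotient $(R\eta_{1}-\pi)/D(R)$ for $1<R\le 1.05$, and by explicit numerical evaluation on ten subintervals for $1.05\le R\le 1.15$.  A one-line continuity/monotonicity argument does not suffice even for the correct ansatz.
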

The explicit formulae are in \S \ref{Explicit formulae}.

\subsection{Preliminaries; sketch of the proof}
 For a rectangular torus, the quantities $\eta_1$, $g_2$ and $g_3$ are real which makes the equations of Proposition \ref{thm type II 2023} easier to handle. To solve them, we make the following extra assumptions.
\begin{ass}
	\begin{enumerate}
		\item 
		The unknowns $a,b,c,d,s,t,u,v,y,z$ are real.
		\item 
		The functions giving the second component are of the form\\ $g'=t\wp'+u\wp+v$ and $h'=t\wp'-u\wp-v$, that is, the unknowns verify
		\begin{equation}
			t=w, u=-y, v=-z.
		\end{equation}
	\end{enumerate}
\end{ass}

Since the lattices of periods $(1,Ri)$ and $(1, \frac{1}{R}i)$ are conformally equivalent, we assume, this time without loss of generality:
\begin{ass} 
$R> 1$
\end{ass}
We rewrite the equations of Prop. \ref{thm type II 2023} in our current case. We eliminate  $a,b,s,d$ and we get a system of $3$ algebraic equations in the $3$ unknowns $c,t,u$.
After some computations, we are able to boil down this system of equations to a single linear equation in the unknown $u^2$. It is of the form 
\begin{equation}\label{equation resumee}
A(g_2,g_3,\eta,R)=u^2B(g_2,g_3,\eta,R)
\end{equation}
where $A$ and $B$ are rational fractions in $g_2,g_3,\eta,R$.\\ 
First we show that the expression $B(g_2,g_3,\eta,R)$ is always negative. It follows that the equation (\ref{equation resumee}) has a solution in $u$ if and only if 
\begin{equation}\label{equation resumee1}
A(g_2,g_3,\eta,R)<0
\end{equation}
The bulk of the proof will consist in proving (\ref{equation resumee1}) for every $R>1$. We use the reprentation of $g_2$, $g_3$ and $\eta$ as series in 
\begin{equation}\label{definition de q de R}
q(R)=e^{-2\pi R}
\end{equation}
 We consider three separate cases:
\begin{enumerate}
	\item $R>1.15$ and we estimate $A$ in terms of $q$
	\item 
	$1<R\leq 1.05$. If $R=1$, $A$ is not defined but it various terms are and we approximate these various parts as factors of $R-1$.
	\item 
	$1.05\leq R\leq 1.15$. We do numerical estimates (using Sagemath) on $10$ subintervals of $[1.05,1.15]$.
\end{enumerate} 

\subsubsection{The series expressing $g_2$, $g_3$ and $\eta$}\label{section avec les series de lang}
 Following [8] we write (see (\ref{definition de q de R}) above for the definition of $q(R)$). 
\begin{equation}
\eta_1(R)=\frac{\pi^2}{3}\Big[1-24\sum_{n=1}^\infty\frac{nq(R)^n}{1-q(R)^n}\Big]
\end{equation}
\begin{equation}
g_2(R)=\frac{4\pi^4}{3}\Big[1+240\sum_{n=1}^\infty\frac{n^3q(R)^n}{1-q(R)^n} \Big]\ \ \ 
g_3(R)=\big(\frac{2}{3}\big)^3\pi^6\Big[1-504\sum_{n=1}^\infty\frac{n^5q(R)^n}{1-q(R)^n} \Big]
\end{equation}     
So they convergence very fast to the following limits.        
\begin{equation}\label{les trois limites}
\lim_{R\longrightarrow+\infty}\eta_1(R)=\frac{\pi^2}{3}\ \ 
\lim_{R\longrightarrow+\infty}g_2(R)=\frac{4\pi^4}{3}\ \ 
\lim_{R\longrightarrow+\infty}g_3(R)=\big(\frac{2}{3}\big)^3\pi^6
\end{equation}

	\subsection{The condition for the existence of solutions - preliminary expression}
	NOTATION. We drop the subscript $1$ and write $\eta$ for $\eta_1$.\\
	We go back to the equations of Proposition \ref{thm type II 2023}. The equations (\ref{periode du premier 2023. bis1}) and (\ref{periode du second 2023. bis1}) are equivalent to\ \ \ \
	$\left\{ \begin{array}{c}
			s=\eta+\dfrac{\pi}{R}(c-1)\\
	d=c(\eta-\dfrac{\pi}{R})+\dfrac{\pi}{R}+\dfrac{t^2}{3}g_2\\
	v=u(\eta-\dfrac{2\pi}{R})
	\end{array} \right.$\\
	 The system (\ref{premier systeme avec six}) becomes (note that $a=-4t^2$)
	\begin{equation}\label{les cinq identites avec vbis}  \left  \{\begin{array}{c}
(1)\ \ 	4t^2[(\dfrac{\pi}{R}-\eta)-\dfrac{\pi}{R}c]+c-u^2=0\\
\\
(2)\ \ 	\dfrac{\pi}{R}c^2-\dfrac{2}{3}g_2t^2+
2(\eta-\dfrac{\pi}{R})c
+2(\dfrac{2\pi}{R}-\eta)u^2
+\dfrac{\pi}{R}=0\\
\\
(3)\ \ 	[(\eta-\dfrac{\pi}{R})+\dfrac{\pi}{R}c][g_2\dfrac{t^2}{3}+(\eta-\dfrac{\pi}{R})c+\dfrac{\pi}{R}]-(\dfrac{2\pi}{R}-\eta)^2u^2-g_3t^2=0
\end{array} \right.\end{equation}
We compute for the equations of (\ref{les cinq identites avec vbis})
$$0=\frac{g_2}{3}(1)+4(\frac{\pi}{R}-\eta)(2)+4(3)$$
$$\mbox{and derive}\ \ \ \ \ 0=t^2[\frac{8}{3}(\eta-\frac{\pi}{R})g_2-4g_3]+[4\eta(\frac{2\pi}{R}-\eta)+\frac{g_2}{3}](c-u^2).\ \ \ \  \mbox{Thus}$$
\begin{equation}\label{v en fonction de X et gamma}
t^2=(c-u^2)T(R)
\end{equation}
\begin{equation}\label{definition de T}
\mbox{where}\ \ \ \ \ \ \ T(R)=\frac{R(g_2-12\eta_1^2)+24\eta_1\pi}{R(12g_3-8g_2\eta_1)+8g_2\pi}
\end{equation}

We plug (\ref{v en fonction de X et gamma}) into Equation (1) of (\ref{les cinq identites avec vbis}) above 
and derive 
\begin{equation}\label{equation un reecrite}
(c-u^2)\Big(4[\eta-\frac{\pi}{R}+\frac{\pi}{R}c]T(R)-1\Big)=0
\end{equation}
\begin{lem}
	There is no solution with $c=u^2$. 
\end{lem}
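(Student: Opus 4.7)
The plan is very short. The key observation is that relation (\ref{v en fonction de X et gamma}) reads $t^2 = (c-u^2)T(R)$, so the hypothesis $c = u^2$ immediately forces $t = 0$. What remains is to rule out the possibility of a real solution to the system (\ref{les cinq identites avec vbis}) with $t = 0$ and $c = u^2$.

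The natural equation to exploit is equation (2) of (\ref{les cinq identites avec vbis}), because equation (1) becomes trivial under $c = u^2$ and $t = 0$, while equation (3) involves $g_3$ and is algebraically less transparent. Setting $t = 0$ kills the $-\tfrac{2}{3}g_2 t^2$ term, and setting $c = u^2$ turns what remains into
$$\frac{\pi}{R}u^4 + 2\Bigl(\eta - \frac{\pi}{R}\Bigr)u^2 + 2\Bigl(\frac{2\pi}{R}-\eta\Bigr)u^2 + \frac{\pi}{R} = 0.$$
The $\eta$-contributions in the middle two terms cancel, and the $\pi/R$-contributions combine to $2\pi/R$, leaving
$$\frac{\pi}{R}\bigl(u^2+1\bigr)^2 = 0.$$
Since $R > 0$ and $u$ is real by the standing assumption that $a,b,c,d,s,t,u,v,y,z \in \mathbb{R}$, this is impossible, which is the required contradiction.

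I do not expect any genuine obstacle. Equations (1) and (3) of (\ref{les cinq identites avec vbis}), the period relations (\ref{periode du premier 2023. bis1}) and (\ref{periode du second 2023. bis1}), and the series expansions of \S\ref{section avec les series de lang} play no role: only equation (2) together with the already-derived identity (\ref{v en fonction de X et gamma}) is needed, and the whole argument reduces to the one-line cancellation displayed above. Notice that it is essential here that $u$ is assumed \emph{real}; over the complex numbers $u^2 = -1$ would provide a solution, which is why this sharp dichotomy $c = u^2$ versus $c \neq u^2$ in (\ref{equation un reecrite}) actually has content only under our reality Assumption.
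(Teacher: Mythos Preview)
Your proof is correct and follows exactly the paper's own argument: from $t^2=(c-u^2)T(R)$ one gets $t=0$, and then equation (2) of (\ref{les cinq identites avec vbis}) collapses to $\tfrac{\pi}{R}(u^2+1)^2=0$, which has no real solution. Your additional remarks (why equation (2) is the natural choice, and why the reality assumption on $u$ is essential) are accurate commentary but not part of the paper's terse one-line proof.
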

\begin{proof}
	If $u^2=c$, then $t=0$ and (2) becomes  $\dfrac{\pi}{R}(u^4+2u^2+1)=0$ which has no real solutions.
\end{proof}

Thus we can rewrite (\ref{equation un reecrite}) as
\begin{equation}\label{valeur de gamma enfin}
c=\frac{R}{\pi}(\frac{1}{4T(R)}-\eta)+1
\end{equation}
We now plug $t^2=(c-u^2)T(R)$ and the expression of $c$ given by (\ref{valeur de gamma enfin}) into Equation (2) of (\ref{les cinq identites avec vbis}); we derive
\begin{lem}\label{existence et mega equation}
	Real solutions of the system (\ref{les cinq identites avec vbis} ) exist if and only if the equation
	\begin{equation}\label{la mega inegalite}
	\frac{R}{\pi}(\frac{1}{4T(R)}-\eta)^2+2[1+\frac{R}{\pi}(\frac{1}{4T(R)}-\eta)](\eta-\frac{1}{3}g_2T(R))=2u^2(\eta-\frac{1}{3}g_2T(R)-\frac{2\pi}{R})
	\end{equation}
	has a real solution $u$.
\end{lem}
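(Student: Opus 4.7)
The plan is to carry out the substitution the authors have set up and bookkeep which of equations (1)--(3) of (\ref{les cinq identites avec vbis}) are actually being used. From equation (1) alone we have already obtained $c=1+\frac{R}{\pi}(\frac{1}{4T(R)}-\eta)$, and from the linear combination $\frac{g_2}{3}(1)+4(\frac{\pi}{R}-\eta)(2)+4(3)=0$ together with the preceding lemma ruling out $c=u^2$ we have $t^2=(c-u^2)T(R)$. Because this combination uses equation (3), once (1) and (2) hold and $t^2$ is defined by the displayed formula, equation (3) is automatic. So the reduction amounts to substituting the closed forms for $c$ and $t^2$ into equation (2) and reading off the resulting equation in $u$.

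The substitution is a single direct calculation. Setting $X=c-1=\frac{R}{\pi}(\frac{1}{4T(R)}-\eta)$, I would first complete the square to get
\begin{equation*}
\frac{\pi}{R}c^2+2\!\left(\eta-\frac{\pi}{R}\right)c+\frac{\pi}{R}=\frac{\pi}{R}X^2+2\eta c=\frac{R}{\pi}\!\left(\tfrac{1}{4T(R)}-\eta\right)^{\!2}+2\eta c.
\end{equation*}
Separately, $-\tfrac{2}{3}g_2 t^2=-\tfrac{2}{3}g_2T(R)c+\tfrac{2}{3}g_2T(R)u^2$ splits into a $c$-piece and a $u^2$-piece. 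Collecting the $c$-coefficients yields $2(\eta-\tfrac{1}{3}g_2T(R))c$, which after reinserting $c=1+X$ reproduces the full middle term of (\ref{la mega inegalite}); collecting the $u^2$-pieces with the existing $2(\tfrac{2\pi}{R}-\eta)u^2$ gives $-2u^2(\eta-\tfrac{1}{3}g_2T(R)-\tfrac{2\pi}{R})$, which after moving to the right-hand side is exactly the right-hand side of (\ref{la mega inegalite}).

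For the converse direction, given a real $u$ solving (\ref{la mega inegalite}), I would define $c$ and $t^2$ by the already-derived formulas (\ref{valeur de gamma enfin}) and (\ref{v en fonction de X et gamma}), and recover $a,b,d,s,v$ from the expressions listed at the start of the subsection; the three equations of (\ref{les cinq identites avec vbis}) are then recovered by reversing the substitutions. There is no genuine obstacle in this lemma---its content is a disciplined bookkeeping of the substitution. The one piece of fine print is that the produced $t$ is real only when $(c-u^2)T(R)\geq 0$; this positivity, together with the sign of $\eta-\tfrac{1}{3}g_2T(R)-\tfrac{2\pi}{R}$, is precisely what the authors will analyze in the next subsection by rewriting (\ref{la mega inegalite}) in the form $A=u^2B$ of (\ref{equation resumee}), and that sign analysis---not the algebraic reduction here---is where the real difficulty lies.
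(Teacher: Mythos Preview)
Your approach is essentially the paper's: substitute $t^2=(c-u^2)T(R)$ and the closed form for $c$ into equation (2) of (\ref{les cinq identites avec vbis}), and observe that (3) then follows from the linear combination. Your explicit algebra (completing the square via $X=c-1$) is correct and more detailed than the paper's one-line ``we plug in and derive''; your remark that the converse must also recover (1) and (3), and your flag about the sign of $t^2=(c-u^2)T(R)$, are both appropriate and not spelled out in the paper.

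One small inaccuracy in your opening summary: you have the roles of the ``$c\neq u^2$'' lemma and equation (1) swapped. The relation $t^2=(c-u^2)T(R)$ comes from the linear combination \emph{alone} (no need for $c\neq u^2$); it is only after substituting this into (1) that one obtains the factored equation (\ref{equation un reecrite}), and \emph{then} $c\neq u^2$ is used to extract the formula (\ref{valeur de gamma enfin}) for $c$. So ``from equation (1) alone we have already obtained $c$'' is not right. This does not affect the correctness of the rest of your argument.
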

We will see that the factor of $u^2$ in the (RHS) of (\ref{la mega inegalite}), n is always negative; the bulk of the proof will be to prove that the (LHS) of (\ref{la mega inegalite}) is also negative.

\subsection{The variation of the numerator and denominator of $T$}
\begin{lem}\label{sens de variation}
	1) The numerator of $T$, i.e. 
	$N(R)=R(g_2-12\eta^2)+24\eta\pi$
	is decreasing on $R\in[1,+\infty)]$.\\
	2) The denominator	
	$D(R)=R(12g_3-8g_2\eta)+8g_2\pi$
	is an increasing function of $R\in[1,+\infty)]$.\\
	Thus $T$ is decreasing on $[1,+\infty)$.
\end{lem}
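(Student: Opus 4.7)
My plan is to convert everything into classical Eisenstein series, apply Ramanujan's differential identities to collapse the quadratic combinations appearing in $N$ and $D$, and then read off monotonicity from term-by-term differentiation of the resulting $q$-series.

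With $\tau = iR$ and $q = q(R) = e^{-2\pi R}$, the formulas of \S \ref{section avec les series de lang} identify $\eta = \frac{\pi^2}{3} E_2$, $g_2 = \frac{4\pi^4}{3} E_4$ and $g_3 = \frac{8\pi^6}{27} E_6$, where $E_2, E_4, E_6$ are the classical Eisenstein series. Using Ramanujan's identities
\begin{equation*}
q \frac{dE_2}{dq} = \frac{E_2^2 - E_4}{12}, \qquad q \frac{dE_4}{dq} = \frac{E_2 E_4 - E_6}{3},
\end{equation*}
together with the chain rule $q \frac{d}{dq} = -\frac{1}{2\pi} \frac{d}{dR}$, I would derive
\begin{equation*}
g_2 - 12 \eta^2 = 8\pi^3 E_2'(R), \qquad 12 g_3 - 8 g_2 \eta = \tfrac{16\pi^5}{3} E_4'(R),
\end{equation*}
which reduce $N$ and $D$ to the compact forms
\begin{equation*}
N(R) = 8\pi^3 \bigl(R E_2'(R) + E_2(R)\bigr), \qquad D(R) = \tfrac{16\pi^5}{3} \bigl(R E_4'(R) + 2 E_4(R)\bigr).
\end{equation*}

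Differentiating once more with $E_2 = 1 - 24 \sum_{n \geq 1} \sigma_1(n) q^n$ and $E_4 = 1 + 240 \sum_{n \geq 1} \sigma_3(n) q^n$ yields
\begin{equation*}
N'(R) = 768\, \pi^4 \sum_{n \geq 1} n \sigma_1(n)\, e^{-2\pi n R}\, (1 - \pi n R),
\end{equation*}
\begin{equation*}
D'(R) = 2560\, \pi^6 \sum_{n \geq 1} n \sigma_3(n)\, e^{-2\pi n R}\, (2\pi n R - 3).
\end{equation*}
For every $n \geq 1$ and every $R \geq 1$, the elementary inequalities $\pi n R \geq \pi > 1$ and $2\pi n R \geq 2\pi > 3$ make every summand of $N'$ negative and every summand of $D'$ positive, which establishes (1) and (2).

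For the concluding statement about $T = N/D$, I then check signs. Since $N$ is decreasing with $\lim_{R \to \infty} N(R) = 8\pi^3 > 0$, we have $N \geq 8\pi^3 > 0$ on $[1, +\infty)$. At $R = 1$ the square-torus data $\eta = \pi$ and $g_3 = 0$ give $D(1) = 12 g_3 - 8 g_2 \eta + 8 g_2 \pi = 0$; strict monotonicity then yields $D > 0$ on $(1, +\infty)$, so $T' = (N'D - ND')/D^2$ is strictly negative there. The main step to get right is the Ramanujan simplification: without it, the expansions for $N$ and $D$ carry cross-terms from $E_2^2$ and $E_2 E_4$ whose signs are not manifest, and direct $q$-series bookkeeping would be considerably heavier.
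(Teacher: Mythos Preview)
Your argument is correct and considerably cleaner than the paper's. The paper differentiates $N$ and $D$ directly in terms of the Lambert series $\sum n^s q^n/(1-q^n)$, then handles the awkward quadratic terms $12\eta^2$ and $8g_2\eta$ by crude inequalities (e.g.\ dropping the term $24\eta'(\pi-R\eta)$ via $\eta(R)\geq\eta(1)=\pi$, or bounding $-8g_2\eta\geq -8g_2\pi^2/3$) together with numerical input such as $g_2(1)\leq 190$ and $1-q(1)>0.99$. Your use of Ramanujan's identities is precisely what collapses those quadratic cross-terms: $g_2-12\eta^2$ and $12g_3-8g_2\eta$ become pure derivatives $8\pi^3 E_2'$ and $\tfrac{16\pi^5}{3}E_4'$, so $N'$ and $D'$ are single $q$-series with manifestly signed coefficients and no numerical estimates are needed. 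The trade-off is that your route invokes an external identity, whereas the paper's stays within elementary series manipulations; but your version gives strict inequalities on all of $[1,\infty)$ without any constants to check, and the concluding sign discussion for $T=N/D$ on $(1,\infty)$ is exactly as in the paper.
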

Using the formulae of \S \ref{section avec les series de lang}, we take limits and derive, for $R>1$, 
\begin{equation}\label{limits des d et n}
\begin{split}
 D(1)=0<D(R)<\lim\limits_{R\longrightarrow\infty}D(R)=\dfrac{32\pi^5}{3},\\
 N(1)>N(R)>\lim\limits_{R\longrightarrow\infty}N(R)=8\pi^3
 \end{split}
\end{equation}
{\it Proof of Lemma \ref{sens de variation}}. 
 The expressions in \S \ref{section avec les series de lang} show 
that the functions $\eta$ and $g_3$ are increasing, while $g_2$ is decreasing.  
\begin{equation}\label{serie infinie de derivees}
\mbox{We compute for a given}\ s\in\mathbb{N} \ \ \ \ \ \frac{d}{dR}\sum_{n=1}^\infty \frac{n^sq(R)^n}{1-q(R)^n}=
-2\pi \sum_{n=1}^\infty \frac{n^{s+1}q(R)^n}{(1-q(R)^n)^2}
\end{equation}
Since $q(1)\leq 0.002$, for a positive integer $n$, $1-q(1)^n\geq 1-q(1)>0.99$ hence
\begin{equation}\label{derivee de la serie}
-\frac{2\pi}{0.99} \sum_{n=1}^\infty \frac{n^{s+1}q(R)^n}{1-q(R)^n}\leq\frac{d}{dR}\sum_{i=1}^\infty \frac{n^sq(R)^n}{1-q(R)^n}\leq 
-2\pi \sum_{n=1}^\infty \frac{n^{s+1}q(R)^n}{1-q(R)^n}
\end{equation}
1)	$N'(R)=g_2(R)-12\eta(R)^2+Rg_2'(R)+24\eta'(R)(\pi-R\eta(R))$
	\begin{equation}\label{intermediaire2023} \leq g_2(R)-12\eta(R)^2+Rg_2'(R)
		\end{equation}
	since $\eta$ is increasing and $\eta(1)=\pi$. The constant term of $g_2(R)-12\eta(R)^2$ is zero; using  (\ref{derivee de la serie}), we derive
	$$(\ref{intermediaire2023})
	 < 320\pi^4[\sum_{i=1}^\infty\frac{q(R)^n}{1-q(R)^n}(n^3-2\pi Rn^4)]< 0$$
2)	Since $g_2$ is decreasing, $\eta$ is increasing and $\eta(1)=\pi$,
	\begin{equation}\label{soixante dix}
	D'(R)	\geq  12g_3(R)\underbrace{-8g_2(R)\eta(R)}_{\geq -8g_2(R)\dfrac{\pi^2}{3}}
	+12Rg'_3(R)-8Rg_2(R)\eta'(R)
	\end{equation}

	Since $g_2(1)\leq 190$, we derive, using (\ref{derivee de la serie}),	$$(\ref{soixante dix})\geq \sum_{i=1}^\infty\dfrac{q^n}{1-q^n}[12\pi^6(\frac{8}{27})(504)(2\pi Rn^6-n^5)-8(\frac{4}{9}\pi^6240n^3+2\pi(24)(\frac{\pi}{3})190n^2)] >0.\ \ \ \qed$$
\subsection{The condition for the existence of solutions}
Since $\eta$ is increasing and $T$ and $g_2$ are decreasing,  we have
$$\eta-\frac{g_2 T}{3}\leq \lim_{R\longrightarrow +\infty}
(\eta-\frac{g_2 T}{3})=0.$$ 
Thus the (RHS) in (\ref{la mega inegalite}) is negative, so we derive
\begin{lem}\label{existence et mega equation1}
	A solution exists if and only if 
	\begin{equation}\label{le mega terme}	\underbrace{\frac{R}{\pi}(\frac{1}{4T}-\eta)^2}_{\begin{subarray}{1}\longrightarrow \pi\ \ \mbox{if}\ R\longrightarrow 1 \\
	\longrightarrow 0^+\ \ \mbox{if}\ R>>1\end{subarray}}+2\underbrace{[1+\frac{R}{\pi}(\frac{1}{4T}-\eta)]}_{\begin{subarray}{1}\longrightarrow 0\ \ \mbox{if}\ R\longrightarrow 1 \\
	\longrightarrow 1\ \ \mbox{if}\ R>>1\end{subarray}}\underbrace{(\eta-\frac{1}{3}g_2T)}_{\begin{subarray}{1}\longrightarrow -\infty\ \ \mbox{if}\ R\longrightarrow 1 \\
	\longrightarrow 0^-\ \ \mbox{if}\ R>>1\end{subarray}}\leq 0
	\end{equation}
\end{lem}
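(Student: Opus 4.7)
The plan is short: combine Lemma \ref{existence et mega equation} with the sign of $\eta - \tfrac{1}{3} g_2 T$ just noted. Equation (\ref{la mega inegalite}) has the schematic form
$$L(R) = 2 u^2 \Bigl(\eta - \tfrac{1}{3} g_2 T - \tfrac{2\pi}{R}\Bigr),$$
with $L(R)$ denoting the left hand side. The preceding paragraph established $\eta - \tfrac{1}{3} g_2 T \le 0$ for every $R > 1$, and $-\tfrac{2\pi}{R} < 0$ is obvious, so the coefficient of $u^2$ is strictly negative. Hence a real $u$ solving (\ref{la mega inegalite}) exists if and only if $L(R) \le 0$, which by Lemma \ref{existence et mega equation} is the desired existence criterion.

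The asymptotic annotations under the braces are then bookkeeping from the series of \S\ref{section avec les series de lang}. For $R \to \infty$, $q(R) = e^{-2\pi R} \to 0$ exponentially, so $\eta \to \tfrac{\pi^2}{3}$, $g_2 \to \tfrac{4\pi^4}{3}$, $g_3 \to \tfrac{8\pi^6}{27}$; in this limit both $g_2 - 12\eta^2$ and $12 g_3 - 8 g_2 \eta$ vanish, so the leading $R$-terms in (\ref{definition de T}) disappear and $T(R) \to \tfrac{24 \eta \pi}{8 g_2 \pi} = \tfrac{3}{4 \pi^2}$, whence $\tfrac{1}{4T} \to \tfrac{\pi^2}{3}$. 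Thus $\tfrac{R}{\pi}(\tfrac{1}{4T} - \eta)^2 \to 0^+$ (the difference is $O(q(R))$, and squaring beats the factor $R$), $1 + \tfrac{R}{\pi}(\tfrac{1}{4T} - \eta) \to 1$, and $\eta - \tfrac{1}{3} g_2 T \to 0^-$ by the monotonicity recalled just before the lemma.

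For $R \to 1^+$, Lemma \ref{sens de variation} together with (\ref{limits des d et n}) gives $D(1) = 0$ while $N(1) > 0$, so $T(R) \to +\infty$; together with $\eta(1) = \pi$ (the square torus value), this yields $\tfrac{1}{4T} - \eta \to -\pi$, hence $\tfrac{R}{\pi}(\tfrac{1}{4T} - \eta)^2 \to \pi$, $1 + \tfrac{R}{\pi}(\tfrac{1}{4T} - \eta) \to 0$, and $\eta - \tfrac{1}{3} g_2 T \to -\infty$ since $g_2(1) > 0$. There is no real obstacle in the present lemma itself; it is a clean rewriting of Lemma \ref{existence et mega equation} together with asymptotic checks. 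The genuinely non-trivial work, which the three cases $R > 1.15$, $1 < R \le 1.05$, and $1.05 \le R \le 1.15$ announced earlier are designed for, is to prove that $L(R)$ is actually $\le 0$ for every $R > 1$; that task lies beyond the present statement.
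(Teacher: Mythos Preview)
Your proof is correct and follows essentially the same approach as the paper: the paper observes (in the lines just before the lemma) that $\eta-\tfrac{1}{3}g_2T\le 0$ by monotonicity, whence the coefficient of $u^2$ in (\ref{la mega inegalite}) is strictly negative, and the lemma follows immediately from Lemma~\ref{existence et mega equation}. Your additional verification of the brace annotations (limits as $R\to 1^+$ and $R\to\infty$) goes beyond what the paper spells out, but is correct and consistent with the later estimates.
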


\subsection{Proof for $R\geq 1.15$}
If $R\geq 1.15$, we have
$
	q(R)\leq q(1.15)\leq 8.10^{-4}
$. 
The small size of $q$ allows us to approximate $g_2$, $g_3$ and $\eta$ by linear functions in $q$. To this effect, we prove
\begin{lem}
	If $R\geq 1.15$, 
	\begin{enumerate}
		\item 
			$q(R)\leq\underbrace{\sum_{n=1}^\infty \dfrac{nq(R)^n}{1-q(R)^n}}_{(\star)}\leq q(R)(1+0.003)$
		\item 
	$q(R)\leq\underbrace{\sum_{n=1}^\infty \dfrac{n^3q(R)^n}{1-q(R)^n}}_{(\star\star)}\leq q(R)(1+0.007)$
	\item
	$q(R)\leq\underbrace{\sum_{n=1}^\infty \dfrac{n^5q(R)^n}{1-q(R)^n}}_{(\star\star\star)}\leq q(R)(1+0.025)$

		\end{enumerate}
\end{lem}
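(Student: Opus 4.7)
The plan is to control each of the three series by comparing every denominator $\frac{1}{1-q^n}$ to its $n=1$ value. Write $q=q(R)=e^{-2\pi R}$, which is a decreasing function of $R$; since $R\geq 1.15$, the authors' observation gives $q\leq e^{-2.3\pi}<8\cdot 10^{-4}$. The lower bounds are essentially free: for each $s\in\{1,3,5\}$, the $n=1$ summand is $\tfrac{q}{1-q}\geq q$ and every other summand is nonnegative, so the three sums $(\star)$, $(\star\star)$, $(\star\star\star)$ are each at least $q$.

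For the upper bounds I would begin from the elementary inequality $\tfrac{1}{1-q^n}\leq \tfrac{1}{1-q}$, valid for all $n\geq 1$, which collapses the awkward denominators and yields
\begin{equation*}
\sum_{n=1}^\infty \frac{n^s q^n}{1-q^n}\;\leq\;\frac{1}{1-q}\sum_{n=1}^\infty n^s q^n.
\end{equation*}
I would then substitute the classical closed forms
\begin{equation*}
\sum_{n\geq 1} nq^n=\frac{q}{(1-q)^2},\quad \sum_{n\geq 1} n^3 q^n=\frac{q(1+4q+q^2)}{(1-q)^4},\quad \sum_{n\geq 1} n^5 q^n=\frac{q(1+26q+66q^2+26q^3+q^4)}{(1-q)^6},
\end{equation*}
giving in each case an explicit rational upper bound of the form $q\cdot N_s(q)/(1-q)^{s+2}$ with $N_s$ a polynomial whose leading coefficient is $1$ and whose other coefficients are small.

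The final step is to verify, for $q\leq 8\cdot 10^{-4}$, that these rational functions are bounded by $q(1+0.003)$, $q(1+0.007)$ and $q(1+0.025)$ respectively. Each of these reduces to a single polynomial inequality in $q$ on a short interval, which I would confirm using the elementary estimate $(1-q)^{-k}\leq 1+\tfrac{kq}{1-kq}$ (valid when $kq<1$) together with a trivial upper bound for $N_s(q)$. The main obstacle, and the only non-automatic part, is the case $s=5$: the tolerance $0.025$ is only modestly larger than the combined linear-in-$q$ contribution ($26q$ from $N_5$ and $7q$ from the expansion of $(1-q)^{-7}$, giving about $33q$), so a crude application of the bound $q\leq 8\cdot 10^{-4}$ lands slightly over the threshold and one must use the sharper numerical value $q(1.15)\approx 7.3\cdot 10^{-4}$ to conclude. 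The cases $s=1,3$ leave comfortable margins.
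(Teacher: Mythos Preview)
Your argument is correct and complete, including the honest observation that the case $s=5$ only closes with the sharper value $q(1.15)\approx 7.3\cdot 10^{-4}$ rather than the rounded $8\cdot 10^{-4}$; indeed $(1+26q+\cdots)/(1-q)^7\approx 1.0242<1.025$ at that value.

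The route, however, is genuinely different from the paper's. You replace every denominator by $1/(1-q)$ and then invoke the exact Eulerian-polynomial closed forms for $\sum_{n\geq 1}n^s q^n$, reducing the question to a rational-function inequality in $q$. The paper instead proves a sublemma of the shape $n^s q^n\leq q^{\alpha_s n}$ (with $\alpha_1=0.9$, $\alpha_3=0.8$, $\alpha_5=0.7$) via the elementary bound $s\ln n/n\leq (1-\alpha_s)\cdot 2\pi R$, treats the terms $n=1,2$ by hand, and bounds the tail by a geometric series; for $s=5$ this yields $(\star\star\star)\leq q\bigl[1+34\,q/(1-q)\bigr]$. Your method is cleaner algebraically and gives an explicit rational bound, but it requires knowing the closed forms and, as you noticed, leaves almost no slack in the $s=5$ case. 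The paper's approach avoids the closed forms entirely and is easier to adapt to other exponents, but it too is tight at $R=1.15$ (the factor $34\,q/(1-q)\approx 0.0248$), so neither method has a real numerical advantage here.
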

\begin{proof}
	The LHS inequalities are obvious. To prove the RHS ones,
	\begin{slem}
		If $n$ is a positive integer and $R\geq 1.15$,\\
		1) $nq(R)^n\leq q(R)^{0.9n}$ \ 2) $n^3q(R)^n\leq q(R)^{0.8n}$\ 3) $n^5q(R)^n\leq q(R)^{0.7n}$
	\end{slem} 
	We prove 3); then 1) and 2) are similar.\\
	 We use that $5\dfrac{ln(n)}{n}\leq 5\dfrac{ln(2)}{2}\leq 1.8\leq 0.6\pi 1.15$ and $\dfrac{1}{q-1}=1+\dfrac{q}{q-1}$. 
	 $$(\star\star\star)\leq q(R)\Big[1+\dfrac{q(R)}{1-q(R)}+2^5\dfrac{q(R)}{1-q(R)}
		+\sum_{n=3}^\infty\dfrac{q(R)^{0.7n}}{1-q(R)}\Big]\leq q(R)\Big[1+34\dfrac{q(R)}{1-q(R)}\Big]$$
		$$\leq q(R)\Big[1+34\dfrac{q(1.15)}{1-q(1.15)}\Big]$$
\end{proof}
We derive the inequalities	
\begin{equation}\label{premiere de la liste}
-9\pi^2q\leq \eta-\frac{\pi^2}{3}\leq -8\pi^2q
\end{equation}
\begin{equation}
320\pi^4q\leq g_2-\frac{4\pi^4}{3}\leq 323\pi^4q
\end{equation}
\begin{equation}
-154\pi^6q\leq g_3-\frac{8\pi^6}{27}\leq -149\pi^6q
\end{equation}
\begin{equation}
-6\pi^4q\leq \eta^2-\frac{\pi^4}{9}\leq -5\pi^4q
\end{equation}
\begin{equation}
-3\pi^6q\leq \eta^3-\frac{\pi^6}{27}\leq -2\pi^6q
\end{equation}
\begin{equation}
	92\pi^6q\leq g_2\eta-\frac{4\pi^6}{9}\leq 97\pi^6q
\end{equation}
\begin{equation}
	853\pi^8q\leq g_2^2-\frac{16}{9}\pi^8\leq 938\pi^8q
\end{equation}
\begin{equation}
26\pi^8q\leq g_2\eta^2-\frac{4}{27}\pi^8\leq 30\pi^8q
\end{equation}
\begin{equation}\label{derniere de la liste}
-54\pi^8q\leq g_3\eta-\frac{8}{81}\pi^8\leq -51\pi^8q
\end{equation}

The inequalities (\ref{premiere de la liste})-(\ref{derniere de la liste}) together with $N\geq 8\pi^3$, see (\ref{limits des d et n}) yield estimates for the terms in (\ref{le mega terme}).
\begin{equation}\label{un sur quatre t}
(95-99R\pi)\pi^2q(R)\leq \dfrac{1}{4T(R)}-\eta<0
\end{equation}
Since $R(95-99R\pi)q(R)$ is an increasing function,  we derive 
\begin{equation}
	1+\dfrac{R}{\pi}\big(\dfrac{1}{4T(R)}-\eta\big)>0.3
\end{equation}

\begin{equation}
\mbox{Since}\ D\leq\dfrac{32\pi^5}{3}\ (\mbox{see} (\ref{limits des d et n})),\ \ \ \ \  \eta-\frac{1}{3}g_2T\leq -92R\pi^3 q(R)
\end{equation}
$$(\ref{le mega terme})\leq \dfrac{R}{\pi}[(99R\pi-95)\pi^2q(R)]^2-0.6(92R\pi^3q(R))$$
$$\leq Rq(R)\pi^3\underbrace{[(99R\pi-95)^2q(R)}_{(\star)\leq 50.3}-55.2]<0$$
since $(\star)$ is a decreasing function of $R$ so we estimate it for $R=1.15$.

\subsection{Proof for  $1<R\leq 1.05$}
In this section and the next one, we use numerical estimates for $N$, $D$ $g_2$ etc; we use Sagemath and write the following code.\\
\includegraphics[scale=0.65]{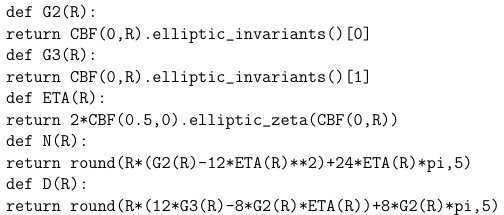}
\subsubsection{Sketch of the proof}
We rewrite (\ref{le mega terme}) as
\begin{equation}\label{expression pour R tout petit}
\underbrace{\frac{R}{\pi}(\frac{D(R)^2}{16N(R)^2}-\eta(R)^2)+2\eta(R)}_{(I)}\ \ \ -\ \ \underbrace{\frac{g_2(R)R}{6\pi}}_{(II)}\ \ \ +\ \ \ \underbrace{\frac{2g_2(R)N(R)}{3\pi}\big(\frac{R\eta(R)-\pi}{D(R)}\big)}_{(III)}
\end{equation}
The term (I) is equal to $\pi$ for $R=1$; we will bound it above by a positive number.\\
The term (II) is negative and we will bound it above by a negative number.\\
In the term (III), both $R\eta-\pi$ and $D$ are zero for $R=1$ so we estimate both $R\eta(R)-\pi$ and $D(R)$ as factors of $R-1$.

\begin{lem} $(I)\leq 3.29$
	\end{lem}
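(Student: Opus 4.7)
The plan is to split $(I)$ into two monotone pieces. I would write
\[ (I) = I_1(R) + \phi(R), \]
where $I_1(R) := RD(R)^2/(16\pi N(R)^2)$ and $\phi(R) := -R\eta(R)^2/\pi + 2\eta(R)$. At $R=1$ the torus is square, so $g_3(1)=0$ and $\eta(1)=\pi$; substituting into $D(R)=R(12g_3-8g_2\eta)+8g_2\pi$ gives $D(1) = 12g_3(1) - 8g_2(1)(\eta(1) - \pi) = 0$, hence $I_1(1)=0$ and $\phi(1) = -\pi + 2\pi = \pi$. Thus $(I)(1) = \pi < 3.29$.

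First, I would show that $\phi$ is strictly decreasing on $[1,\infty)$. Differentiating,
\[ \phi'(R) = -\eta(R)^2/\pi + 2\eta'(R)\bigl(1 - R\eta(R)/\pi\bigr). \]
Since the series of \S\ref{section avec les series de lang} make $\eta$ increasing with $\eta(1)=\pi$, for $R\geq 1$ we have $R\eta(R)/\pi \geq 1$ and $\eta'(R) \geq 0$, so both summands are nonpositive. In particular $\phi(R) \leq \phi(1) = \pi$ on $[1,1.05]$.

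Next, I would observe that $I_1$ is increasing on $[1,\infty)$: by Lemma \ref{sens de variation}, $D$ is increasing with $D(1)=0$ while $N$ is positive and decreasing, so $D/N$ is nonnegative and increasing, and the extra factor $R/(16\pi)$ preserves monotonicity. Combining the two bounds yields
\[ (I)(R) \leq I_1(1.05) + \phi(1) = I_1(1.05) + \pi \qquad \text{for all } R \in [1, 1.05]. \]

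The remaining task, and the only real obstacle, is the numerical bound $I_1(1.05) \leq 3.29 - \pi \approx 0.148$. Here $q(1.05) = e^{-2.1\pi} \approx 1.36 \times 10^{-3}$, so the $q$-series of \S\ref{section avec les series de lang} converge extremely fast; a Sagemath evaluation keeping two or three terms in each series yields $\eta(1.05) \approx 3.182$, $g_2(1.05) \approx 172.9$, $g_3(1.05) \approx 80.7$, and hence $N(1.05) \approx 294$, $D(1.05) \approx 740$, giving $I_1(1.05) \approx 0.13$. The delicate point is that in $D(R)$ the leading constants of $12g_3$ and $-8g_2\eta$ exactly cancel (one checks $12 \cdot 8\pi^6/27 = 8 \cdot (4\pi^4/3)(\pi^2/3)$), so one must carry enough digits in $\sum n^5 q^n/(1-q^n)$ before multiplying by the prefactor $504$ in $g_3$, lest rounding errors swamp the estimate.
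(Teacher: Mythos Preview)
Your proof is correct and follows essentially the same route as the paper: the same splitting $(I)=I_1+\phi$, the same derivative computation showing $\phi$ is decreasing with $\phi(1)=\pi$, the same monotonicity of $I_1$ via Lemma~\ref{sens de variation}, and the same numerical bound $I_1(1.05)\lesssim 0.14$. Your added remarks on why $D(1)=0$ and on the cancellation of leading terms in $D$ are helpful commentary but not new ingredients.
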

{\it Proof}.
$\dfrac{d}{dR}(-\dfrac{R}{\pi}\eta^2+2\eta)=
-\dfrac{1}{\pi}\eta^2+2\eta'(R)(1-\dfrac{R\eta(R)}{\pi})<0.$

\begin{equation}
\mbox{thus}\ \ \ -\frac{R}{\pi}\eta(R)^2+2\eta(R)\leq -\frac{1}{\pi}\eta(1)^2+2\eta(1)=\pi
\end{equation}
Since $D$ is increasing and $N$ is decreasing, for $1<R<1.05$,\\
$$\dfrac{RD(R)^2}{16\pi N(R)^2}\leq \dfrac{1.05D(1.05)^2}{16\pi N(1.05)^2}\leq 0.14\ \ \ \ \ \ \qed$$
\begin{lem}$(II)\leq -9.1$
	\end{lem}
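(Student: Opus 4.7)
The plan is to read $(II)$ as the signed term in (\ref{expression pour R tout petit}), so that the assertion reads $-\frac{g_2(R)R}{6\pi}\le -9.1$ on $1<R\le 1.05$, i.e.\ $g_2(R)\,R\ge 54.6\,\pi\approx 171.53$. This matches the prose comment that ``the term (II) is negative and we will bound it above by a negative number''.

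The approach is a short monotonicity argument, identical in spirit to the one used for $(I)$. From the series representation in \S \ref{section avec les series de lang} together with (\ref{serie infinie de derivees}) (both already invoked in the proof of Lemma \ref{sens de variation}), the function $g_2$ is strictly decreasing on $[1,+\infty)$. Combined with $R\ge 1$, this gives
$$g_2(R)\,R\;\ge\;g_2(1.05)\qquad\text{for }R\in(1,1.05],$$
so everything reduces to the single scalar inequality $g_2(1.05)\ge 54.6\,\pi$.

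To establish this last inequality, I would invoke Sagemath exactly as in the preceding subsection: truncate the series defining $g_2$ at small order (the tail is dominated by a geometric series in $q(1.05)=e^{-2.1\pi}\approx 1.37\cdot 10^{-3}$, so a handful of terms suffices), or else give a by-hand lower bound by retaining only the $n=1$ term, $g_2(1.05)\ge \tfrac{4\pi^4}{3}\bigl(1+240\,e^{-2.1\pi}\bigr)\approx 172.6$.

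The only real concern is that the numerical margin is tight: the gap between the lower bound $\approx 172.6$ and the required threshold $\approx 171.53$ is under one percent. In particular it is essential to keep at least one term of the series beyond the leading constant $\tfrac{4\pi^4}{3}\approx 129.88$, which on its own would yield only $\tfrac{2\pi^3}{9}\approx 6.89$, well short of $9.1$. This tightness is presumably why the authors restrict to the narrow window $1<R\le 1.05$ here and treat the adjacent range $1.05\le R\le 1.15$ in a separate step by direct Sage evaluation on ten subintervals.
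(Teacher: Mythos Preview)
Your proposal is correct and matches the paper's one-line proof: since $g_2$ is decreasing and $R>1$, $(II)=-\dfrac{g_2(R)R}{6\pi}\le -\dfrac{g_2(1.05)}{6\pi}\le -9.1$, the last step being a numerical check. Your write-up simply makes explicit the use of $R\ge 1$ and supplies the by-hand lower bound $g_2(1.05)\ge \tfrac{4\pi^4}{3}(1+240\,e^{-2.1\pi})$ that the paper leaves to Sagemath.
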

\begin{proof}
Since $g_2$ is decreasing, 
$(II)\leq -\dfrac{g_2(1.05)}{6\pi}\leq -9.1$. \end{proof}

\begin{lem}\label{lemma iii}$(III)\leq 5.3$.
	\end{lem}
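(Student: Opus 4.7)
The key observation is that at $R=1$ one has $\eta(1)=\pi$ and $g_3(1)=0$ (the square torus invariants recalled at the start of Section 4), so both $R\eta(R)-\pi$ and $D(R)=R(12g_3-8g_2\eta)+8\pi g_2$ vanish at $R=1$: term $(III)$ is a $0/0$ form at the left endpoint of our interval. The plan is therefore to factor $(R-1)$ from both numerator and denominator by writing
\begin{equation*}
R\eta(R)-\pi=\int_1^R [\eta(t)+t\eta'(t)]\,dt,\qquad D(R)=\int_1^R D'(t)\,dt,
\end{equation*}
and to control each integrand uniformly on $[1,1.05]$.

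Differentiating the series of \S \ref{section avec les series de lang} gives $\eta'(R)=16\pi^3\sum_n n^2 q(R)^n/(1-q(R)^n)^2$, and analogous expressions for $g_2'$ and $g_3'$. Since $q(R)\leq q(1)=e^{-2\pi}\approx 1.87\cdot 10^{-3}$, these series converge extremely fast; one computes $\eta(1)+\eta'(1)\approx 4.07$ for the numerator integrand. For the denominator integrand, direct calculation gives $D'(1)=12g_3'(1)-8g_2(1)\bigl[\pi+\eta'(1)\bigr]$ (the two contributions $\pm 8\pi g_2'(1)$ cancel); with $g_3'(1)\approx 1684$ and $g_2(1)\approx 188$, this is roughly $1.41\cdot 10^4$, bounded well away from $0$. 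Because $\eta$, $g_2$, $g_3$ are monotone by Lemma \ref{sens de variation} and $q$ decays exponentially, the derivatives drift very little over $[1,1.05]$, so one obtains uniform bounds $\eta(t)+t\eta'(t)\leq M_1$ and $D'(t)\geq M_2>0$ on $[1,1.05]$, to be verified numerically with Sagemath as in the preceding lemmas.

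Combining these gives $(R\eta(R)-\pi)/D(R)\leq M_1/M_2$, and using the decrease of $g_2$ and $N$ on $[1,+\infty)$ from Lemma \ref{sens de variation} we conclude
\begin{equation*}
(III)\leq \frac{2g_2(1)N(1)}{3\pi}\cdot\frac{M_1}{M_2}.
\end{equation*}
The limiting value at $R=1$ is $\dfrac{2g_2(1)(g_2(1)+12\pi^2)}{3\pi}\cdot\dfrac{\pi+\eta'(1)}{12g_3'(1)-8g_2(1)(\pi+\eta'(1))}\approx 3.53$, leaving a comfortable margin below $5.3$. The main obstacle is purely numerical: one must check, using Sagemath, that the slow drift of $M_1/M_2$ across $[1,1.05]$ never pushes the product above $5.3$. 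This is plausible because $q(R)$ stays below $2\cdot 10^{-3}$ throughout the interval, so all quantities are close to their values at $R=1$; if the single-interval estimate is not sharp enough one can subdivide $[1,1.05]$ and apply the same scheme on each subinterval, as was done for $[1.05,1.15]$ in the next step of the proof.
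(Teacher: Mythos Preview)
Your approach is correct in spirit and shares the paper's central idea: both $R\eta(R)-\pi$ and $D(R)$ vanish at $R=1$, so one must extract the factor $(R-1)$ from each before bounding the ratio. However, the paper implements this differently and more explicitly.

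Where you write $R\eta-\pi$ and $D$ as integrals and propose to bound the integrands $\eta+t\eta'$ and $D'$ uniformly (leaving the final check to Sagemath), the paper avoids differentiating $D$ altogether. It proves two sublemmas directly from the $q$-series: $g_3(R)\geq 1418(R-1)$ and $R\eta(R)-\pi\leq 4.4(R-1)$ on the relevant range. Then it uses the algebraic identity
\[
D(R)=12Rg_3(R)-8g_2(R)\bigl(R\eta(R)-\pi\bigr),
\]
which immediately gives
\[
\frac{D(R)}{R\eta(R)-\pi}\;\geq\;12\cdot\frac{1418}{4.4}-8g_2(1)\;\geq\;2354,
\]
and hence $(III)\leq 12338/2354\leq 5.3$. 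This decomposition is the key simplification you missed: it reduces everything to a linear bound on $g_3$ and on $R\eta-\pi$, rather than a lower bound on the full derivative $D'(t)$.

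Your route would work, but as you yourself note it is incomplete: you have not produced the constants $M_1,M_2$, only argued they exist and computed the limiting value $\approx 3.53$ at $R=1$. The paper's argument is self-contained and requires no interval subdivision or deferred numerical check.
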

All the factors in (III) are positive. First we notice that
\begin{equation}\label{le gros terme de III}
\frac{2g_2(R)N(R)}{3\pi}\leq \frac{2g_2(1)N(1)}{3\pi}\leq 12338
\end{equation}

\begin{slem}\label{estimee de gtrois pres de un}
	If $R\leq 1.1$, then $g_3(R)\geq 1418(R-1)$
\end{slem}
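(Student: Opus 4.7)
The square torus corresponds to $R=1$ and has $g_3(1)=0$ (equivalently $E_6(i)=0$; directly, $504\sum_{n\geq 1}\frac{n^5 q(1)^n}{1-q(1)^n}=1$). So the inequality amounts to a lower bound on the secant slope of $g_3$ from $R=1$. My strategy is to use concavity of $g_3$ on $[1,1.1]$ to reduce the problem to a single numerical estimate at the right endpoint, which can then be handled with Sagemath in the style of the preceding lemmas.

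First I would establish concavity. Differentiating the series of \S \ref{section avec les series de lang} once, using $q'(R)=-2\pi q(R)$, gives
\begin{equation*}
g_3'(R)=\frac{8064\pi^7}{27}\sum_{n\geq 1}\frac{n^6 q(R)^n}{(1-q(R)^n)^2},
\end{equation*}
and a further differentiation (using $\frac{d}{du}\frac{u}{(1-u)^2}=\frac{1+u}{(1-u)^3}$) yields
\begin{equation*}
g_3''(R)=-\frac{16128\pi^8}{27}\sum_{n\geq 1}\frac{n^7 q(R)^n(1+q(R)^n)}{(1-q(R)^n)^3},
\end{equation*}
which is manifestly negative on $[1,1.1]$. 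Since $g_3(1)=0$, concavity implies that the graph of $g_3$ lies above its secant from $(1,0)$ to $(1.1,g_3(1.1))$, so that
\begin{equation*}
g_3(R)\geq\frac{g_3(1.1)}{0.1}(R-1)\qquad\text{for every }R\in[1,1.1].
\end{equation*}
It then suffices to verify the single numerical bound $g_3(1.1)\geq 141.8$.

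This last step is a direct Sagemath evaluation: with $q(1.1)=e^{-2.2\pi}$ of order $10^{-3}$, three or four terms of the series for $g_3(1.1)$ determine it to many digits, and the tail is controlled by a geometric series in $q(1.1)$. The main obstacle I anticipate is the tightness of this numerical check---the target $141.8$ sits close to the actual value of $g_3(1.1)$---so the truncation and tail bounds must be handled carefully enough that the rigorous lower bound still clears $141.8$. If the direct bound at $R=1.1$ is awkwardly close to the target, a natural safety net is to apply the same concavity argument on a slightly smaller subinterval $[1,R_0]$ with $R_0<1.1$, where the secant slope $g_3(R_0)/(R_0-1)$ is comfortably larger; this still covers the range $R\in(1,1.05]$ actually used in the surrounding subsection and in the proof of Lemma \ref{lemma iii}.
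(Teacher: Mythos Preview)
Your concavity route is a genuine alternative to the paper's argument, which instead manipulates the series directly: it writes
\[
g_3(R)-g_3(1)=\Big(\tfrac{8}{27}\Big)504\,\pi^6\sum_{n\geq 1}n^5\,\frac{q(1)^n-q(R)^n}{(1-q(1)^n)(1-q(R)^n)},
\]
keeps only the $n=1$ term, and then uses the elementary inequality $1-e^{-h}\geq h-\tfrac{h^2}{2}$ with $h=2\pi(R-1)$ to extract the factor $(R-1)$. Your approach trades that inequality for a second-derivative computation plus a single endpoint evaluation; either idea is fine in principle.

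However, your primary plan has a real gap: the numerical check at $R=1.1$ does not merely come close, it \emph{fails}. Evaluating the series gives $g_3(1.1)\approx 137.1$, which is below the required $141.8$. In fact the sublemma as literally stated, with hypothesis $R\leq 1.1$, is false at $R=1.1$; the ``$1.1$'' is a slip in the paper. The paper's own proof has the same issue: the step $2\pi\big(1-\pi(R-1)\big)\geq 5.29$ only holds for $R-1\leq 0.05$, which is exactly the range $1<R\leq 1.05$ in which the sublemma is actually invoked (see the proof of Lemma~\ref{lemma iii}).

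So your fallback is not a safety net but a necessity. With $R_0=1.05$ it works comfortably: $g_3(1.05)\approx 80.1$, giving secant slope $\approx 1603>1418$. If you want to keep the concavity argument, state and prove the bound on $[1,1.05]$ rather than $[1,1.1]$; that is what both approaches genuinely establish, and it is all that the application needs.
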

\begin{proof}

 $g_3(R)=g_3(R)-g_3(1)=(\dfrac{8}{27})504\pi^6\sum_{n=1}^\infty n^5\dfrac{q(1)^n-q(R)^n}{(1-q(1)^n)(1-q(R)^n)}
	$
\begin{equation}\label{g trois}
\geq(\dfrac{8}{27})504\pi^6(q(1)-q(R))=(\dfrac{8}{27})504\pi^6e^{-2\pi}(1-e^{-2\pi(R-1)})
	\end{equation}
To bound (\ref{g trois}) below, we recall: $\forall h>0,\ \ 
	1-e^{-h}\geq h-\dfrac{h^2}{2}$, thus
	\begin{equation} 
	1-e^{-2\pi(R-1)}\geq 2\pi(R-1)[1-\pi(R-1)]\geq 5.29(R-1)
	\end{equation}
\end{proof}
\begin{slem}
	If $R\geq 1$, then  $R\eta-\pi\leq 4.4(R-1)$
\end{slem}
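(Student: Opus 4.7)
The plan is to set $f(R) = R\eta(R) - \pi$, note that $f(1) = 0$ (since $\eta(1) = \pi$ at the square torus, already recorded in the paper), and show that $f'(R) \le 4.4$ for all $R \ge 1$. The bound on the sublemma then follows immediately from the mean value theorem: $f(R) = \int_1^R f'(t)\,dt \le 4.4(R-1)$.

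Differentiating gives $f'(R) = \eta(R) + R\eta'(R)$, so I would bound the two pieces separately. For the first piece, the expansion in \S\ref{section avec les series de lang} together with the limits in (\ref{les trois limites}) shows that $\eta$ is increasing with limit $\pi^2/3$, so $\eta(R) \le \pi^2/3 \approx 3.29$. For the second piece, I differentiate the series for $\eta$ using (\ref{serie infinie de derivees}) with $s=1$ to get
\begin{equation*}
\eta'(R) = 16\pi^3 \sum_{n=1}^\infty \frac{n^2 q(R)^n}{(1 - q(R)^n)^2}.
\end{equation*}
The key trick is that $R q(R)^n = R e^{-2\pi R n}$ has derivative $(1 - 2\pi R n)e^{-2\pi R n} < 0$ for $R \ge 1$ and $n \ge 1$, so the function $R \mapsto R q(R)^n$ is decreasing on $[1,\infty)$ and therefore $R q(R)^n \le q(1)^n = e^{-2\pi n}$. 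Pulling one factor of $R$ inside the sum yields
\begin{equation*}
R \eta'(R) \le 16\pi^3 \sum_{n=1}^\infty \frac{n^2 e^{-2\pi n}}{(1 - e^{-2\pi n})^2}.
\end{equation*}

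Since $e^{-2\pi} \approx 1.87 \cdot 10^{-3}$, the $n=1$ term of this numerical series is of order $0.93$ and the tail is a negligible geometric remainder; a crude estimate of the form already used in the paper (bounding $(1-e^{-2\pi n})^2 \ge (1-e^{-2\pi})^2 \ge 0.99$ and summing $n^2 e^{-2\pi n}$ against a geometric series) gives $R\eta'(R) \le 1$ with room to spare. Combining the two bounds yields
\begin{equation*}
f'(R) \le \frac{\pi^2}{3} + 1 < 4.4 \qquad \text{for all } R \ge 1,
\end{equation*}
and the sublemma follows. The only real work is the numerical bookkeeping for $R\eta'(R)$; there is no structural obstacle, since all the series here converge extremely fast at $R \ge 1$.
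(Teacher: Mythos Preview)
Your proof is correct, and it takes a genuinely different route from the paper. The paper does not differentiate $R\eta(R)-\pi$; instead it estimates $\eta(R)-\eta(1)$ directly from the series, using $1-e^{-h}\le h$ termwise to obtain $\eta(R)-\pi\le 1.14(R-1)$, and then writes $R\eta-\pi=R(\eta-\pi)+\pi(R-1)$. Your approach bounds $f'(R)=\eta(R)+R\eta'(R)$ uniformly, the key observation being that $R\,q(R)^n$ is decreasing on $[1,\infty)$, which lets you push the factor $R$ inside the series for $\eta'$ and evaluate everything at $R=1$.

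What this buys you: your bound $f'(R)\le \pi^2/3 + 1 < 4.4$ is valid for \emph{all} $R\ge 1$, matching the sublemma as stated. The paper's decomposition yields $R\eta-\pi\le(1.14R+\pi)(R-1)$, which gives the constant $4.4$ only for $R$ in the range actually used in that section ($R\le 1.05$); for larger $R$ the constant in front of $(R-1)$ grows. So your argument is both a little cleaner and slightly stronger, at no real extra cost.
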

\begin{proof}
	$\eta(R)-\pi = \eta(R)-\eta(1)=8\pi^2\sum_{n=1}^\infty n\dfrac{e^{-2\pi n}
		(1-e^{-2\pi(R-1)n})}{(1-q(1)^n)(1-q(R)^n)}$
	\begin{equation}\label{estimee de eta pres de un}
		\leq (R-1)\dfrac{16\pi^3}{(1-e^{-2\pi})^2}\sum_{n=1}^\infty n^2e^{-2\pi n}
	\end{equation}
	since $1-e^{-2\pi n(R-1)}\leq 2\pi n(R-1)$. Now
	$$\sum_{n=1}^\infty n^2e^{-2\pi n}=e^{-2\pi}+\sum_{n=2}^\infty n^2e^{-2\pi n}\leq e^{-2\pi}+\int_{1}^\infty x^2e^{-2\pi x}dx=e^{-2\pi}(1+\dfrac{1}{2\pi}+\dfrac{1}{2\pi^2}+\dfrac{1}{4\pi^3})$$
	hence $\eta(R)-\pi\leq 1.14(R-1)$ and $R\eta-\pi=R(\eta-\pi)+\pi(R-1)$
\end{proof}
We derive from these Sublemmas,
\begin{equation}\label{lhs pour r pres de 1}
	\frac{D}{R\eta-\pi}\geq 12\Big(\frac{1418}{4.4}\Big)-8g_2(1)\geq 2354.
\end{equation}
Plugging in (\ref{le gros terme de III}), we get 
$
	(III)\leq \dfrac{12338}{2354}\leq 5.3$ which proves Lemma \ref{lemma iii}. \\
	\\
In conclusion, $	(\ref{le mega terme})\leq 3.3-9.1+5.3=-0.5$

\subsection{Proof for  $1.05\leq R\leq 1.15$}
We need to show that (\ref{expression pour R tout petit}) is negative. Since $D$ and $\eta$ are increasing, while $N$ and $g_2$ are decreasing, we have, for $R_1\leq R\leq R_2$, 
\begin{equation}
	(I)\leq \pi+\dfrac{R_2D(R_2)^2}{16N(R_2)^2}=A(R_1,R_2)
	\end{equation}
\begin{equation}
	(II)\leq -\dfrac{R_1g_2(R_2)}{6\pi}=B(R_1,R_2)
\end{equation}
\begin{equation}
	(III)\leq \dfrac{2}{3\pi}[g_2(R_1)N(R_1)\big(\dfrac{R_2\eta(R_2)-\pi}{D(R_1)}\big)]=C(R_1,R_2)
\end{equation}
 $$\mbox{We estimate}\ \ \ A(R_1^{(k)}, R_2^{(k)})+B(R_1^{(k)}, R_2^{(k)})+C(R_1^{(k)}, R_2^{(k)})$$ where  $(R_1^{(k)}=1+\dfrac{k}{100}, R_2^{(k)}=1+\dfrac{k+1}{100})$ where $k=5,6,...,14$ and get \\ 
 $-2.27,
 -2.14,
 -1.96,
 -1.75,
 -1.51,
 -1.25,
 -0.97,
 -0.68,
 -0.38$

\subsection{Explicit formulae}\label{Explicit formulae}
We recall that the map is given  by
\begin{equation}\label{tore minimal - fin}
\Big(\int (-4t^2\wp^2+c\wp+d)+\overline{\int (\wp+s)}, t\wp+\overline{t\wp}+\int (u\wp+v)-\overline{\int (u\wp+v)}\Big).
\end{equation}
Using (\ref{p seconde}), we rewrite it as 
\begin{equation}\label{tore minimal - fin}
\Big( -\dfrac{2}{3}t^2\wp'+\int (c\wp+e)+\overline{\int (\wp+s)}, t\wp+\overline{t\wp}+\int (u\wp+v)-\overline{\int (u\wp+v)}\Big).
\end{equation}
where $e=d-g_2\dfrac{t^2}{3}$.\\
The constants $t,c,d,s,u,v$ are as follows ($T(R)$ is defined in (\ref{definition de T})).
\begin{itemize}

\item 
$u^2=\dfrac{\dfrac{R}{\pi}(\dfrac{1}{4T(R)}-\eta_1)^2+2[1+\dfrac{R}{\pi}(\dfrac{1}{4T(R)}-\eta_1)][\eta_1-\dfrac{1}{3}g_2T(R)]}{2(\eta_1-\frac{1}{3}g_2T(R)-\dfrac{2\pi}{R})}$
\item $c=\dfrac{R}{\pi}(\dfrac{1}{4T(R)}-\eta(R))+1$
\item
$t^2=(c-u^2)T(R)$
\item 
$s=\dfrac{\pi}{R}(c-1)+\eta$
\item
$d=c(\eta_1-\dfrac{\pi}{R})+\dfrac{\pi}{R}+g_2\dfrac{t^2}{3}$
\item
$v=u(\eta_1-\dfrac{2\pi}{R})$
\end{itemize}

\subsection{Exemple for $R=2$}
We round the quantities by $3$ digits. 
\begin{itemize}
	\item 
$u=0.08$
	\item 
$c=0.989$	
	\item 
	$t^2=0.075$
	\item 
	$s=3.272$
	\item 
	$d=6.523$
	\item
$v=0.012$
\end{itemize}

\subsection{Limit cases for the formulae of \S \ref{Explicit formulae}}
\subsubsection{$R\longrightarrow 1^+$}
When $R$ tends to $1^+$, the formulae in (\ref{Explicit formulae}) show us that the tori tend to the $3D$-Chen-Gackstatter torus. Thus we can view the solutions in \S \ref{Explicit formulae} for the rectangular tori as a family which deforms the $3D$-Chen-Gackstatter torus.
\subsubsection{$R\longrightarrow +\infty$}
 The quantity $Re^{-2\pi R}$ converges very fast to $0$, for example, for $R=3$, $e^{-6\pi}$ is of order $10^{-8}$ so the parameters in (\ref{tore minimal - fin}) converge very fast to constant numbers. More  precisely, $u$ and $v$ tend to $0$ and all the other parameters tend to non zero numbers. Thus, the minimal tori (\ref{tore minimal - fin}) are equivalent to 
 $$
 \Big(-\dfrac{\pi^2}{2}\wp'+2Re\big(\int(\wp+\dfrac{\pi^2}{3})\big), \dfrac{\sqrt{3}}{\pi}Re(\int\wp)\Big)
 $$
 \section{The ends of the minimal tori; questions of embeddedness}
 We use the integral formulae of the tangent and normal curvatures given in \S \ref{integral formulae for the normal bundle}.

 If ${\mathcal T}$ is one of the tori constructed above, its total curvature is $-8\pi$, hence $d_++d_-=4$. Moreover ${\mathcal T}$ is not complex (Proposition \ref{pas complexe}), hence neither of the $d_\pm$'s is zero, thus $d_+=d_-=2$ and 
 \begin{equation}\label{fibre normal trivial}
 \int_\Sigma K^N=0
 \end{equation}
 In \S \ref{writhe  at infinity}, we discussed ends and braids at infinity and we now investigate these objects for the minimal tori which we constructed  above. 
 
\subsection{The ends of the tori} 
 
 \begin{enumerate}
 	\item the end of the $3D$ Chen-Gackstatter torus is\\ 
 	\begin{equation}\label{3DCG}
 	z\mapsto(z^3+o(|z^3|),Re(z^2)+o(|z^2|))
 	\end{equation}
 	\item the end of a $4D$ Chen-Gackstatter torus (Theorem \ref{theorem sur le square torus}) is
 	\begin{enumerate}
 		\item if $|\lambda|=1$, 
 		$
 		z\mapsto(z^3+o(|z^3|),Re(z^2)+o(|z^2|),0)
 		$
 		\item if $|\lambda|\neq 1$, after a change of coordinates, \\
 		\begin{equation}\label{4DCG}
 		z\mapsto(z^3+o(|z^3|),z^2+o(|z^2|))
 		\end{equation}
 		The end is the same as the $4D$ Enneper surface with two nodes (\ref{enneper4d points doubles}) and the same as the end of the complex algebraic curves (\ref{algebraic torus}); its braid at infinity is the braid of the $(3,2)$-torus knot, hence $w_\infty=\pm 4$. Thus, (\ref{fibre normal trivial}) together with Theorem  \ref{proposition:curvature of the normal bundle} show that i) these tori are not embedded ii) if one of them has only transverse double points, the total number of these points is $\pm 2$. 		
 			
 \end{enumerate}	
\item for a rectangular torus ${\mathcal T}_R$ (Theorem \ref{thm:rectangular tori}), the end is
\begin{equation}\label{rectangular1}
 	 z\mapsto(z^3+o(|z^3|),Re(z^2)+o(|z^2|),Im(z)+o(|z|))
 	 \end{equation}
 	 If we look at the limit of the knots $\Sigma\cap\mathbb{S}(0,R)$ where $R$ tends to infinity, we get the knot of the $4D$ embedded Enneper surface given by (\ref{embedded4Denneper}) except that it has one singular point.\\
 	 \includegraphics[scale=0.1]{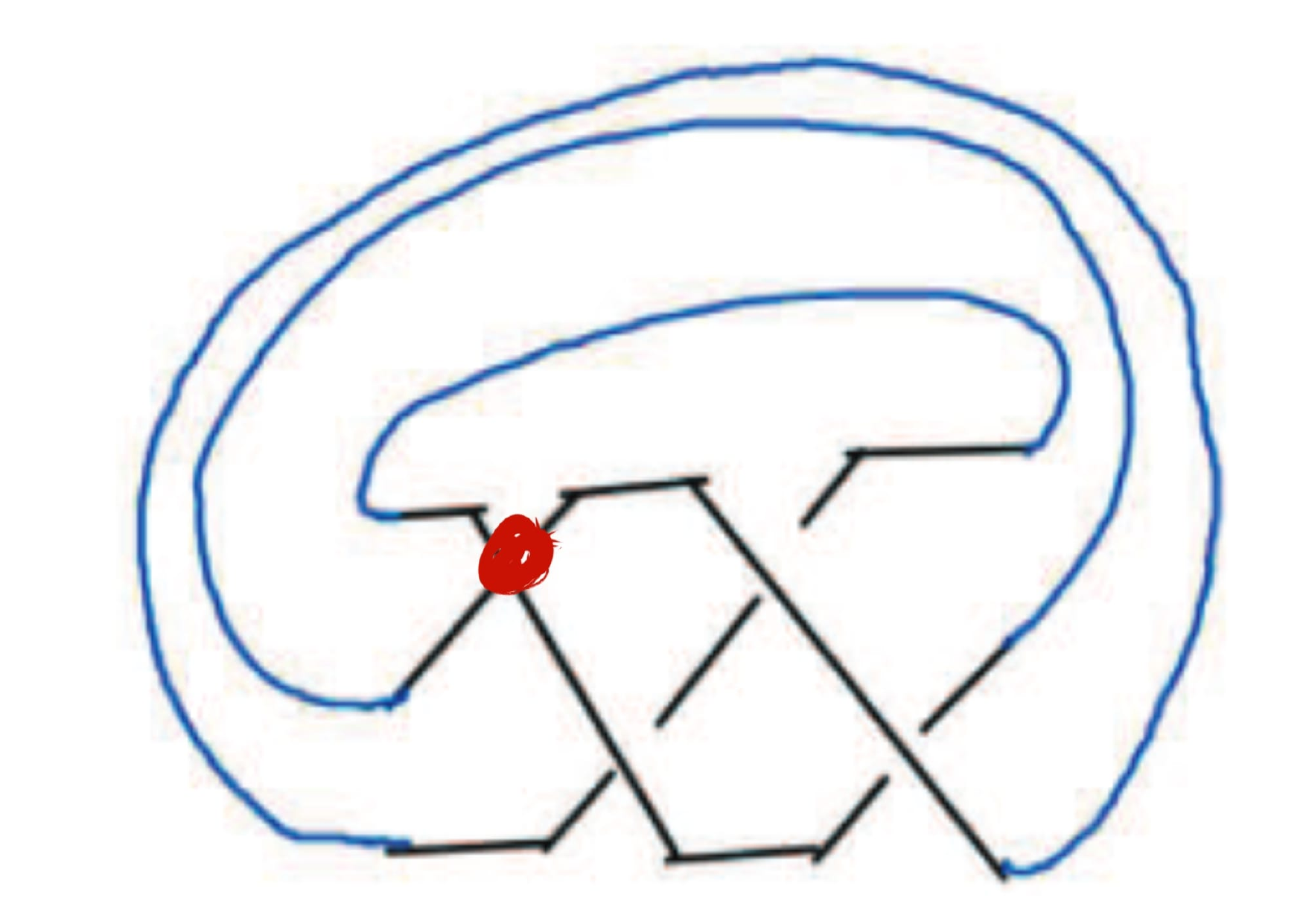}
 	
 \end{enumerate}

Thus we conclude with questions.

\begin{ques}
	Do the tori of Theorem \ref{thm:rectangular tori} have codimension $1$ singularities? Could we get embedded minimal rectangular tori if we look for non real solutions of the equations in Proposition \ref{thm type II 2023}?
	\end{ques}

\footnotesize{Marc Soret: Universit\'e F. Rabelais, D\'ep. de Math\'ematiques, 37000 Tours, France,\\
	Marc.Soret@univ-tours.fr\\
	\\
	Marina Ville: Univ Paris Est Creteil, CNRS, LAMA, F-94010 Creteil, France\\	
	villemarina@yahoo.fr}

\end{document}